\tikzset{node distance=1.5cm, auto}
\newtheorem{theorem}{Theorem}[section]
\newtheorem{proposition}[theorem]{Proposition}
\newtheorem{lemma}[theorem]{Lemma}
\newtheorem{corollary}[theorem]{Corollary}
\theoremstyle{definition}
\newtheorem{definition}[theorem]{Definition}
\newtheorem{algorithm}[theorem]{Algorithm}
\theoremstyle{remark}
\newtheorem{example}[theorem]{Example}
\newtheorem{remark}[theorem]{Remark}
\def\<{\langle}
\def\>{\rangle}
\def\RR{\mathbb{R}}
\def\CC{\mathbb{C}}
\def\ZZ{\mathbb{Z}}
\def\T{\mathbf{T}}
\def\H{\mathcal{H}}
\def\V{\mathcal{V}}
\def\C{\mathcal{C}}
\def\D{\mathcal{D}}
\def\N{\mathcal{N}}
\def\Arg{\operatorname{Arg}}
\def\pr{\operatorname{pr}}
\def\area{\operatorname{area}}
\def\N{\mathcal{N}}
\title{On dimer models and coamoebas}
\date{\today}
\author{Jens Forsg{\aa}rd}
\address{Department of Mathematics \\ Texas A\&M University \\
College Station, TX 77843.}
\email{jensf@math.tamu.edu}
\begin{document}

\begin{abstract}
We describe the relationship between dimer models on the real two-torus and coamoebas
of curves in $(\CC^\times)^2$. 
We show, inter alia, that the dimer model obtained from the shell of the coamoeba is a
deformation retract of the closed coaomeba if and only if the number of connected components 
of the complement of the closed coamoeba is maximal. Furthermore, we show that in general the closed coamoeba of the characteristic polynomial of a dimer model does not have the maximal number of
components of its complement.
\end{abstract}

\maketitle
\section{Introduction}

Dimer models, i.e., bipartite graphs embedded in an orientable surface which admit perfect matchings, appeared in the 1930s as
statistical models for the absorption of di-atomic molecules (dimers) on a crystal surface.
About a decade and a half ago it was observed that dimer models have
vast applications within mirror symmetry and string theory. In particular, dimer models
embedded in the real two-torus $\T^2$ are related to brane tilings of toric singular 
Calabi--Yau threefolds, see \cite{FHKV08, Gul08, HV07} and the references therein.

Let $G\subset \T^2$ be a dimer model. We can form the Kasteleyn matrix 
(or weighted adjacency matrix) associated with $G$, see, e.g., \cite{KO06}.
Its determinant, which is a bivariate polynomial, is known as the characteristic polynomial
of $G$, and its Newton polygon $\N$ is known as the characteristic polygon of $G$.
In the physics literature, the characteristic polygon is called the toric diagram, see \cite{HV07}.

The inverse problem, to construct a dimer model with a prescribed characteristic polygon $\N$,
has been addressed in a number of articles. 
The first proposed solution, now known as the Hanany--Vegh algorithm, assumed the existence of an 
oriented \emph{admissible} hyperplane arrangement on $\T^2$ \emph{dual} to the polygon $\N$
\cite{HV07, Sti08}, see \S\ref{sec:graphs} for definitions.
An alternative algorithm not subject to any additional assumptions
has been given by Gulotta \cite{Gul08}.

The present work has its origin in a series of papers by Futaki--Ueda and Ueda--Yamzaki
\cite{FU10, UY11, UY13}, who study three polygons $\N$ in detail:
the unit simplex, the unit square, and one special polygon with five vertices
(the case $k=1$ in Example~\ref{ex:HananyVeghExample}).
Their main observation was that \emph{in these three examples} the admissible hyperplane arrangement
can be taken as the shell $\H$ of the coamoeba $\overline{\C}$ of the characteristic 
polynomial of $G$. In addition, the dimer model $G$ can be realized as a deformation retract of the 
coamoeba $\overline{\C}$.
The main purpose of this work is to explain the relationship between the dimer model 
$G$ and the coamoeba $\overline{\C}$.
As is common in the amoeba literature, we take the approach of Gelfand, Kapranov, and Zelevinsky \cite{GKZ94}
and study a family of polynomials with fixed support whose Newton polygon is $\N$.

In general, the shell $\H$ of the coamoeba $\C$ is not an admissible hyperplane arrangement. 
Even worse, we provide a polygon which does not admit any dual 
admissible hyperplane arrangement, see Example \ref{ex:HananyVeghExample}.
This settles a question arising from \cite{Sti08}.
One point of this article is that the notion of admissibility is a red herring.
We consider instead the notion of \emph{index}, which refines the notion of admissibility.
Each dual hyperplane arrangement $\H$ of a polygon $\N$ has an associated index map. That is,
there is a map $\iota \colon \pi_0(\T^2\setminus \H) \rightarrow \ZZ$ subject to a certain crossing
rule, see \S \ref{sec:graphs}. It is not hard to show that the hyperplane arrangement $\H$
is admissible if and only if $|\iota(P)| \leq 1$ for all $P \in \pi_0(\T^2\setminus \H)$.
Also, if $\H$ is admissible, then the number of cells $P \in \pi_0(\T^2\setminus \H)$ 
of index zero is exactly $2 \area(\N)$. 
The main technical result of this paper is the following characterization.

\begin{theorem}
\label{thm:ShellVolP2}
A generic oriented dual hyperplane arrangement $\H$ of $\N$ has $2\area(\N)$-many cells of index zero
if and only if\/ $|\iota(P)| \leq 2$ and for each cell with $|\iota(P)| = 2$ it holds that
$P$ is a triangle.
\end{theorem}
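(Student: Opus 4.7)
The plan is to recast the statement as a combinatorial accounting: I would seek an identity of the form
\[
N_0(\H) = 2\area(\N)-D(\H),
\]
where $N_0(\H) := \#\{P : \iota(P)=0\}$ and $D(\H)\ge 0$ is a sum of cell-level contributions indexed by cells of nonzero index, designed so that $D(\H)=0$ if and only if $|\iota|\le 1$ everywhere, or $|\iota|=2$ only on triangular cells. The equivalence $N_0=2\area(\N)\iff D(\H)=0$ would then be the content of the theorem.

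The first ingredient is the Euler characteristic computation for a generic dual arrangement $\H$ on $\T^2$. By transversality every vertex of $\H$ is a double point of two lines drawn from distinct edges $e,e'$ of $\N$, so
\[
V(\H)=\sum_{\{e,e'\}}\ell(e)\ell(e')|\det(n_e,n_{e'})|, \qquad E(\H)=2V(\H),
\]
and $\chi(\T^2)=0$ gives $F(\H)=V(\H)$. Since $2\area(\N)$ is computed from the very same primitive data of $\N$ via the shoelace formula, this already ties $F(\H)$ to $2\area(\N)$. The second ingredient is the crossing rule: orient the lines of $\H$, so that $\iota$ jumps by $\pm 1$ across each edge. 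Walking around $\partial P$ for each cell $P$ and summing globally produces a relation of the shape $\sum_k a_k N_k = b\,\area(\N)+c$ for explicit integer constants. Eliminating $F(\H)$ between this relation and the Euler identity isolates $N_0$ and exhibits $D(\H)$ as a sum over cells $P$ of nonzero index of explicit, locally defined corrections.

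The crux is then to show that each local correction is nonnegative, with equality exactly under the triangle/index-$2$ condition. I would analyze the correction as a function of the pair $(|\iota(P)|, n(P))$, where $n(P)$ is the number of sides of $P$, by walking around $\partial P$ and tracking both the $\iota$-jumps across the bounding edges and the angular turn of their oriented normals. Using that these normals are drawn from the edge-normal directions of $\N$ and that $|\iota(P)|$ controls the net signed count of oriented edges traversed, one obtains positivity of the correction with equality in exactly two cases: $|\iota(P)|\le 1$, or $|\iota(P)|=2$ together with $n(P)=3$. This case analysis --- verifying both positivity and the equality cases --- is the main obstacle of the proof, and it is precisely here that the triangle threshold emerges. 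Once the positivity is established, the identity $N_0=2\area(\N)-D(\H)$ immediately yields the claimed equivalence.
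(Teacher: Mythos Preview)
Your outline has a genuine gap at precisely the step you flag as the crux. The accounting identity you hope for, $N_0(\H)=2\area(\N)-D(\H)$ with $D$ a sum of manifestly nonnegative \emph{cell-local} corrections, is asserted rather than derived, and the ingredients you list do not produce it. The Euler relation on $\T^2$ gives only $\sum_k N_k = F(\H)=V(\H)$, and $V(\H)=\sum_{i<j}|\det(\Gamma_i,\Gamma_j)|$ is \emph{not} $2\area(\N)$ in general (already for the unit simplex $V=3$ while $2\area(\N)=1$). The crossing rule likewise does not by itself yield a linear relation of the form $\sum_k a_kN_k=b\,\area(\N)+c$; the link between $\H$ and $\area(\N)$ in this problem passes through \emph{angles}, not cell counts. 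Concretely, the identity one needs is $2\sum_{v}\theta_o(v)=4\pi\,\area(\N)$ from \cite{FJ14}, together with the exterior-angle computation $2\pi N_0=\Theta_n(-1)+2\Theta_o(0)+\Theta_n(1)$, which gives the deficit as
\[
2\pi\,D(\H)=\Big(2\sum_{k>0}\Theta_o(k)-\Theta_n(1)\Big)+\Big(2\sum_{k<0}\Theta_o(k)-\Theta_n(-1)\Big).
\]
This is indexed by \emph{vertices}, not cells, and neither summand is visibly nonnegative term-by-term.

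The second, more serious, gap is the equality-case analysis. You propose to check positivity of a correction as a function of the pair $(|\iota(P)|,n(P))$, but the deficit above cannot be bounded by looking at one cell at a time: the angle at a vertex of an index-$2$ cell is not determined by that cell's combinatorics. The paper's proof handles this by a genuinely non-local argument: it partitions the vertex set $\V_1$ into right-turning cycles $C$, lifts each to $\RR^2$, and compares the angle sum $2R_o(C)+2S_o(C)+I_o(C)$ against $2\pi$ times the turning number, via a further decomposition $C=\sum_j C_j$ along level sets of an auxiliary index $\hat\iota$. The equality case (each $C$ bounds a single triangle with $I_o(C)=0$) emerges only after two technical lemmas about such cycles; this is where the triangle threshold actually comes from, and it is not visible from any per-cell invariant. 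Your plan does not contain a substitute for this step.
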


We provide a generalization of the Hanany--Vegh algorithm, the \emph{index graph algorithm},
which does not require the dual hyperplane arrangement (i.e., the shell of the coamoeba) to be admissible, 
see \S\ref{sec:graphs}. This algorithm is equivalent to an algorithm set forward by Stienstra \cite{Sti08}, but is formulated in terms of the index map $\iota$. 
The justification for why we rewrite Stienstra's algorithm in this manner,
is that we need to be able to handle the case set forward in Theorem \ref{thm:ShellVolP2}.
The reader familiar with Yang--Baxter modifications (see \S\ref{ssec:YangBaxter})
will realize that the moral of Theorem~\ref{thm:ShellVolP2} is that rather
than requiring $\H$ to be an admissible arragnement, we should require that 
$\H$ has the correct number of cells of index zero. 
Our main result is the following theorem.

\begin{theorem}
\label{thm:DeformationRetract}
Let $f$ be a bivariate polynomial with Newton polygon $\N$. 
Then, the dimer model $G$ obtained from the shell $\H$ by the 
index graph algorithm and Yang--Baxter modifications 
is a deformation retract of the coamoeba $\overline{\C}$
if and only if 
the cardinality of\/ $\pi_0\big(\T^2\setminus \overline{\C}\big)$
is equal to $2\area(\N)$.
\end{theorem}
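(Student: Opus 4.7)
The plan is to compare the cellular decomposition of $\T^2$ induced by the shell $\H$ with the closed coamoeba $\overline{\C}$. Since $\H\subset\overline{\C}$, every connected component of $\T^2\setminus\overline{\C}$ is a union of cells of $\T^2\setminus\H$. The guiding principle, standard in the amoeba literature, is that a cell $P$ of $\T^2\setminus\H$ lies in the complement of $\overline{\C}$ precisely when $\iota(P)=0$, whereas cells of nonzero index are contained in $\overline{\C}$. Thus the number of components of $\T^2\setminus\overline{\C}$ is at most the number of index-zero cells of $\H$, which by Theorem~\ref{thm:ShellVolP2} is at most $2\area(\N)$, with equality in the theorem's generic situation.

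For the forward direction, I would observe that if $G$ is a deformation retract of $\overline{\C}$ in $\T^2$, then $\pi_0(\T^2\setminus \overline{\C})\cong \pi_0(\T^2\setminus G)$. By construction of $G$ via the index graph algorithm together with Yang--Baxter modifications, the faces of $\T^2\setminus G$ are in bijection with the index-zero cells of the (possibly modified) shell, and this count equals $2\area(\N)$ by Theorem~\ref{thm:ShellVolP2}.

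For the reverse direction, assume $|\pi_0(\T^2\setminus\overline{\C})|=2\area(\N)$. Combining the upper bound above with this lower bound forces the shell $\H$ to have exactly $2\area(\N)$ cells of index zero \emph{and} forces each such cell to form a distinct component of $\T^2\setminus\overline{\C}$ (no two are merged). By Theorem~\ref{thm:ShellVolP2}, $|\iota(P)|\leq 2$ for every cell and the cells with $|\iota(P)|=2$ are triangles. One then constructs the deformation retract locally: on each cell of index $\pm 1$ one uses the standard retraction already implicit in \cite{FU10, UY11, UY13}, while on each triangular cell of index $\pm 2$ the Yang--Baxter modification provides exactly the local model needed to retract this piece of $\overline{\C}$ onto the modified dimer $G$. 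Glue these local retractions along $\H$ to produce a global deformation retraction $\overline{\C}\to G$.

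The main obstacle I anticipate is the compatibility of the local retractions along the shell: one must verify that on each segment of $\H$ the retractions coming from the two adjacent cells of nonzero index agree, so that they assemble into a continuous map. The Yang--Baxter move is precisely the combinatorial device that guarantees this compatibility in the index-$\pm 2$ case, so the crux of the argument is to show that after applying all necessary Yang--Baxter modifications the resulting dimer $G$ and the shell $\H$ carry matching boundary data, and that the cells eliminated by the moves are exactly the triangles identified in Theorem~\ref{thm:ShellVolP2}. Once this combinatorial matching is established, the topological retraction follows cell by cell.
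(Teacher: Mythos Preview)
Your overall strategy matches the paper's, but there is a genuine gap in the setup that propagates through both directions.

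You assert as a ``guiding principle'' that a cell $P$ of $\T^2\setminus\H$ lies in the complement of $\overline{\C}$ \emph{precisely when} $\iota(P)=0$. Only one implication holds: $\iota(P)\neq 0$ does force $P\subset\overline{\C}$, but an index-zero cell may very well meet the coamoeba (the fiber over a point of $P$ can have $2,4,\dots$ preimages). The paper handles this asymmetry by introducing the \emph{combinatorial coamoeba} $\D=\bigcup_{|\iota(P)|>0}\overline{P}$ as an intermediate object: one has $\D\subset\overline{\C}$ and an injection $\pi_0(\T^2\setminus\overline{\C})\hookrightarrow\pi_0(\T^2\setminus\D)$ given by inclusion. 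In the reverse direction, maximality of $|\pi_0(\T^2\setminus\overline{\C})|$ forces this injection to be a bijection, and only then can one conclude that $\D$ is a strong deformation retract of $\overline{\C}$. Your local gluing argument retracts $\D$ onto $G$ (this is essentially the paper's Theorem~\ref{thm:P2GivesDeformationRetract}), but you still need the separate step $\overline{\C}\simeq\D$, and for that the false biconditional is not enough.

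In the forward direction, your invocation of Theorem~\ref{thm:ShellVolP2} to conclude that $G$ has $2\area(\N)$ faces is circular: that theorem is a characterization, and you have not verified its hypotheses. The paper instead argues that any (consistent) dimer model with characteristic polygon $\N$ has \emph{at least} $2\area(\N)$ faces, while $|\pi_0(\T^2\setminus\overline{\C})|\leq 2\area(\N)$ by \cite{FJ14}; sandwiching gives the equality. You should replace your appeal to Theorem~\ref{thm:ShellVolP2} with this lower-bound/upper-bound argument.
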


It was shown in \cite{FJ14} that  $2 \area(\N)$ 
is an upper bound on the cardinality of 
$\pi_0\big(\T^2\setminus \overline{\C}\big)$.
Hence, the dimer model $G$ is a deformation retract of $\overline{\C}$
if and only if the cardinality of $\pi_0\big(\T^2\setminus \overline{\C}\big)$
is maximal.

In the examples studied by Futaki, Ueda, and Yamazaki 
the polynomial $f$ appearing in
Theorem~\ref{thm:DeformationRetract} was taken as the 
characteristic polynomial of the dimer model $G$.
The characteristic polynomial defines a Harnack curve and, by
recent results of Lang \cite{Lan15}, the complement of the coamoeba of a Harnack curve has few 
connected components. 
In particular, on can not in general obtain the dimer model as a deformation retract of the coamoeba
of its characteristic polynomial; for an explicit example see Remark~\ref{rem:HarnackCurves}. 
The complement of the coamoeba of the characteristic polynomial may have the maximal 
number of connected components only if $\N$ is \emph{sparse along edges}, meaning that the vertices
of $\N$ are the only integer points on its boundary.

Theorem \ref{thm:DeformationRetract} raises the question of which polygons $\N$ admit a 
polynomial $f$ with Newton polygon $\N$ such that the complement of the coamoeba
$\overline{\C}$ has the maximal number of connected components.
As of this writing, the strongest result in this direction was obtained in \cite{FJ15}. 
It concerns the case when 
$f$ is supported on a (possibly degenerate) \emph{circuit}.
In the bivariate case, studying polynomials supported on a circuit is equivalent to studying tetranomials.
This is, in turn, equivalent to assuming that the Newton polygon $\N$ is either a triangle or
a quadrilateral.
From Theorem \ref{thm:DeformationRetract} 
and \cite{FJ15} we obtain the following result.

\begin{corollary}
\label{cor:HVforCircuits}
Let $f$ be a generic bivariate polynomial supported on a circuit.
Then, the cardinality of\/ $\pi_0(\T^2 \setminus \overline{\C})$ is maximal.
In particular, the dimer model $G$ obtained from $\H$ by the Hanany--Vegh algorithm
and Yang--Baxter modifications is a deformation retract of the coamoeba $\overline{\C}$.
\end{corollary}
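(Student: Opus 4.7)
The plan is to deduce the corollary directly from Theorem~\ref{thm:DeformationRetract} by verifying its hypothesis via the main result of \cite{FJ15}. Recall that a circuit is a minimally affinely dependent configuration, so in $\ZZ^2$ the Newton polygon $\N$ of a polynomial $f$ supported on a circuit is either a triangle (with a fourth support point lying in its interior or on its boundary) or a quadrilateral; in either case $f$ has exactly four monomials.

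First I would invoke the main theorem of \cite{FJ15}, which asserts that for a generic polynomial $f$ supported on a circuit the complement of the closed coamoeba attains the maximal possible cardinality $2\area(\N)$, the upper bound established in \cite{FJ14}. This is precisely the first assertion of the corollary. Theorem~\ref{thm:DeformationRetract} then applies, and yields a dimer model $G$ obtained from the shell $\H$ via the index graph algorithm followed by Yang--Baxter modifications which is a deformation retract of $\overline{\C}$.

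To replace ``index graph algorithm'' by ``Hanany--Vegh algorithm'' in the conclusion, one must verify that for generic circuits the shell $\H$ is itself admissible, i.e., that $|\iota(P)|\leq 1$ for every cell $P\in \pi_0(\T^2\setminus\H)$. Since the number of cells of $\H$ of index zero equals $2\area(\N)$, Theorem~\ref{thm:ShellVolP2} a priori only gives the weaker bound $|\iota(P)|\leq 2$, with equality permitted on triangular cells. The approach here is combinatorial: the dual hyperplane arrangement of a triangle or a quadrilateral consists of at most four families of parallel great circles on $\T^2$, and their possible index patterns can be enumerated. Using genericity of $f$ to rule out triple concurrences and coincident cells, one verifies that no cell of index $\pm 2$ actually occurs, so $\H$ is admissible and the Hanany--Vegh algorithm suffices.

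The main obstacle I anticipate is this last verification. Though the combinatorics is small, one must carefully relate the index of a cell of $\H$ to the arguments of the coefficients of $f$ on the circuit, and show that an index-two cell would force a non-generic alignment among these arguments. An alternative, and perhaps cleaner, route is to appeal to the explicit description of $\H$ available from the analysis of tetranomial coamoebas in \cite{FJ15}, which should make admissibility visible by inspection of the two cases $\N$ a triangle or a quadrilateral.
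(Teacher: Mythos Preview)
Your first two paragraphs reproduce the paper's proof exactly: invoke \cite[\S 5.1]{FJ15} to obtain maximality of $\pi_0(\T^2\setminus\overline{\C})$, then apply Theorem~\ref{thm:DeformationRetract}. That is the entire argument the paper gives.

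Your third and fourth paragraphs address a point the paper does not: the corollary says ``Hanany--Vegh algorithm'' while Theorem~\ref{thm:DeformationRetract} says ``index graph algorithm.'' The paper simply does not make this distinction in its proof; it treats the phrase ``Hanany--Vegh algorithm and Yang--Baxter modifications'' as synonymous with the construction of Theorem~\ref{thm:DeformationRetract}, the index graph algorithm being presented throughout as the paper's generalization of Hanany--Vegh. So from the paper's standpoint your additional verification is unnecessary.

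That said, if you insist on the stricter reading, your proposed resolution has a gap. You assert that for a generic circuit the shell is admissible, i.e.\ no cell of index $\pm 2$ occurs, and that this ``can be enumerated.'' But you do not carry out the enumeration, and it is not obviously true: even for quadrilaterals, the edges of $\N$ may have integer length greater than one, so $\H$ can contain many parallel lines per facet, and Theorem~\ref{thm:ShellVolP2} only guarantees that index-$2$ cells are triangular, not that they are absent. Neither \cite{FJ15} nor the present paper claims admissibility in the circuit case. If you want to pursue this route you would need to actually prove it, and the cleaner path is probably the one you mention last: use the explicit description of circuit coamoebas from \cite{FJ15} or \cite{For16} rather than a bare combinatorial enumeration. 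But again, the paper does not go down this road at all.
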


Let us also emphasize Remark~\ref{rem:SingularPoints} where, in the circuit case, we find
that the argument map induces an explicit bijection between the critical points of the polynomial $f$ 
and the gauge groups in the quiver theory of the dimer model. 
That these two sets are of equal cardinality is known in the general case,
see \cite{FHKV08}. However, this is to the best of our knowledge the first explicit bijection
appearing in the literature.

\section{The coamoeba and the shell}
Let $A = \{\alpha_1, \dots, \alpha_N\} \subset \ZZ^2$ be a finite set of cardinality $N$. Consider a 
bivariate polynomial
\begin{equation}
\label{eqn:f}
f(z) = \sum_{k=1}^N x_k\,z^{\alpha_k}.
\end{equation}
We will identify $f$ with its coefficient vector $(x_1, \dots, x_N)$, and we will assume that the
representation of $f$ is minimal in the sense that $f \in (\CC^\times)^A$. Hence, $A$ is the \emph{support} 
of the polynomial $f$. The \emph{Newton polygon} of $f$, denoted $\N$, is the convex hull of $A$ when embedded in $\RR^2 = \RR\otimes \ZZ^2$. 

Let $\Gamma$ be a face of $\N$, which we denote by $\Gamma \prec \N$. The image of $f$ under the projection $\pr_\Gamma\colon (\CC^\times)^A \rightarrow (\CC^\times)^{\Gamma\, \cap\, A}$ is called the \emph{truncation} of $f$ to the face $\Gamma$, and is denoted $f_\Gamma$.
Let $v_1, \dots, v_m \in \ZZ^2$ denote the vertices of $\N$ cyclically ordered counterclockwise on the boundary of $\N$.
If $\Gamma$ is the facet with endpoints $v_k$ and $v_{k+1}$, where indices should be understood
modulo $m$, then we will identify $\Gamma$ with the vector $\Gamma = v_{k+1} - v_k$.
Let
\begin{equation}
\label{eqn:NormalVector}
\gamma = M\,\Gamma,
\end{equation}
where $M$ acts by clockwise rotation by the angle $\pi/2$.
That is, $\gamma$ is the outward pointing integer normal vector of $\Gamma$ 
whose integer length is equal to that of $\Gamma$. Note that $M$ restricts to a $\ZZ$-module
automorphism of $\ZZ^2$.

The coamoeba $\C$ of an algebraic variety $V\subset (\CC^\times)^n$ is defined as its image under the componentwise argument mapping $\Arg\colon (\CC^\times)^n \rightarrow \T^n$.
That is, $\C = \Arg(V)$.
Here, $\T = \RR/2\pi \ZZ$.
In this paper we are only concerned with the case when $V$ is a 
(not necessarily irreducible) curve in $(\CC^\times)^2$. 
In this case, if $\Gamma$ is a facet of $\N$,
then the truncation $f_\Gamma$ has a pseudo-homogeneity encoded by the normal
vector $\gamma$ of $\Gamma$;
the coamoeba of $f_\Gamma$, denoted $\C_\Gamma$, is a family of lines in $\T^2$ whose 
directional vector (when viewed in the universal covering $\RR^2$) is $\gamma$.
In particular, $\C_\Gamma$ has an orientation induced by $\gamma$. 
Let $C = C(\T^2)$ be the free abelian group generated by the set of (oriented)
one-cycles in $\T^2$. The standard basis in $\RR^2$ induces an isomorphism
$H_1(\T^2) \simeq \ZZ^2$. Let $h\colon C \rightarrow \ZZ^2$ be the homology map
in this basis, and let $\hat h = M^{-1}\circ h$ where $M$ is as in \eqref{eqn:NormalVector}.
We have that
\[
h(\C_\Gamma) = \gamma
\quad \text{and} \quad
\hat h(\C_\Gamma) = \Gamma.
\]
In the dimer literature it is more common to use $\hat h$ that $h$, see, e.g., \cite{HV07, KO06, UY11}. 

Let $\H$ be an oriented hyperplane arrangement (i.e., line arrangement) in $\T^2$. 
Viewing $\H$ as a union of lines, write $\H = \bigsqcup_{i=1}^m \H_m$
where two lines in $\H$ belongs to the same set $\H_k$ if and only if they are parallel.
We will say that $\H$ is a \emph{dual} arrangement
of the polygon $\N$ if there is a bijective relation between the set $\{\H_k \, | \, k=1, \dots, m\}$
the set of facets $\Gamma_k$, $k=1, \dots m$, of $\N$ given by $\hat h(\H_k) = \Gamma_k$.
The \emph{shell} of the coamoeba $\C$ is defined as the oriented hyperplane arrangement
\[
\H = \bigcup_{\Gamma \prec \,\N}\C_\Gamma,
\]
where the sum runs over all proper faces of $\N$. 
We can view $\H$ as an oriented hyperplane arrangement in $\T^2$ and,
by construction, the shell is a dual arrangement of $\N$.
We note that there exist dual arrangements of $\N$ which cannot be realized as the shell of some 
bivariate polynomial with Newton polygon $\N$.

That $\H$ captures topological properties of $\C$ can intuitively be seen from the
fact, shown in \cite{Joh13} 
(see also \cite{NS13a}, where $\H$ is called the \emph{phase limit set}), that 
\[
\overline \C = \C \, \cup \, \H.
\]

A hyperplane arrangement $\H$ is said to be \emph{simple} if any triple of distinct hyperplanes 
in $\H$ has empty intersection. It is not hard to show that the set of all polynomials $f\in (\CC^\times)^A$
whose shell is simple is open; its complement is a proper semi-analytic variety.
We will say that $f$, or $\H$, is generic if $\H$ is a simple hyperplane arrangement.
If $A$ is sparse along edges 
(that is, if for each facet $\Gamma$ the intersection $A\cap\Gamma$ is a dupleton)
then the space of polynomials whose shell is non-simple 
is the inverse image of the argument map of a hyperplane arrangement in $\T^A$.

\section{Graphs constructed from the shell $\H$.}
\label{sec:graphs}

We will in this section introduce our generalization of the Hanany--Vegh algorithm
and make a few important remarks regarding Yang--Baxter modifications.

\subsection{Johansson's index map}
By a construction of Johansson, see \cite[\S 6]{Joh13}, the complement of the
shell $\H$ can be equipped with an \emph{index map} $\iota$. That is, there is a map
\[
\iota\colon \pi_0\left(\T^2\setminus\H\right) \rightarrow \ZZ,
\]
where, for a generic $\theta \in P$, the magnitude $|\iota(P)|$ is a lower bound on, 
and has the same parity as, 
the number of points in the fiber $V\,\cap\, \Arg^{-1}(\theta)$. 

The index map $\iota$ is subject to the following \emph{crossing rule};
crossing a hyperplane of $\H$ (in the universal cover $\RR^2$ of $\T^2$) with tangent vector $\gamma$, 
along a smooth path with tangent vector $\ell$ at the point of intersection with $\H$, 
the index $\iota(P)$ increases or decreases by one depending on wether the pair 
$(\ell, \gamma)$ is a positively or a negatively oriented basis.
See Figure~\ref{fig:indexmap}, where a generic intersection point of $\H$
is illustrated.
It is clear that the crossing rule determines the indices $\iota(P)$ up to a universal shift.
To avoid confusion, we note that the index map $\iota$ is not a height function of the type commonly
appearing in the dimer literature.

\subsection{The odd index graph algorithm}
Assume that $\H$ is generic.
We will construct a pair of dual mixed graphs from the pair $(\H, \iota)$. 
In these mixed graphs,
each vertex has an assigned binary vertex weight (or color).
However, adjacent vertices differ in color only if the common edge is undirected
and, hence, the graphs are not colored in the strict graph theoretical meaning of the word.

\begin{figure}
\includegraphics[width=30mm]{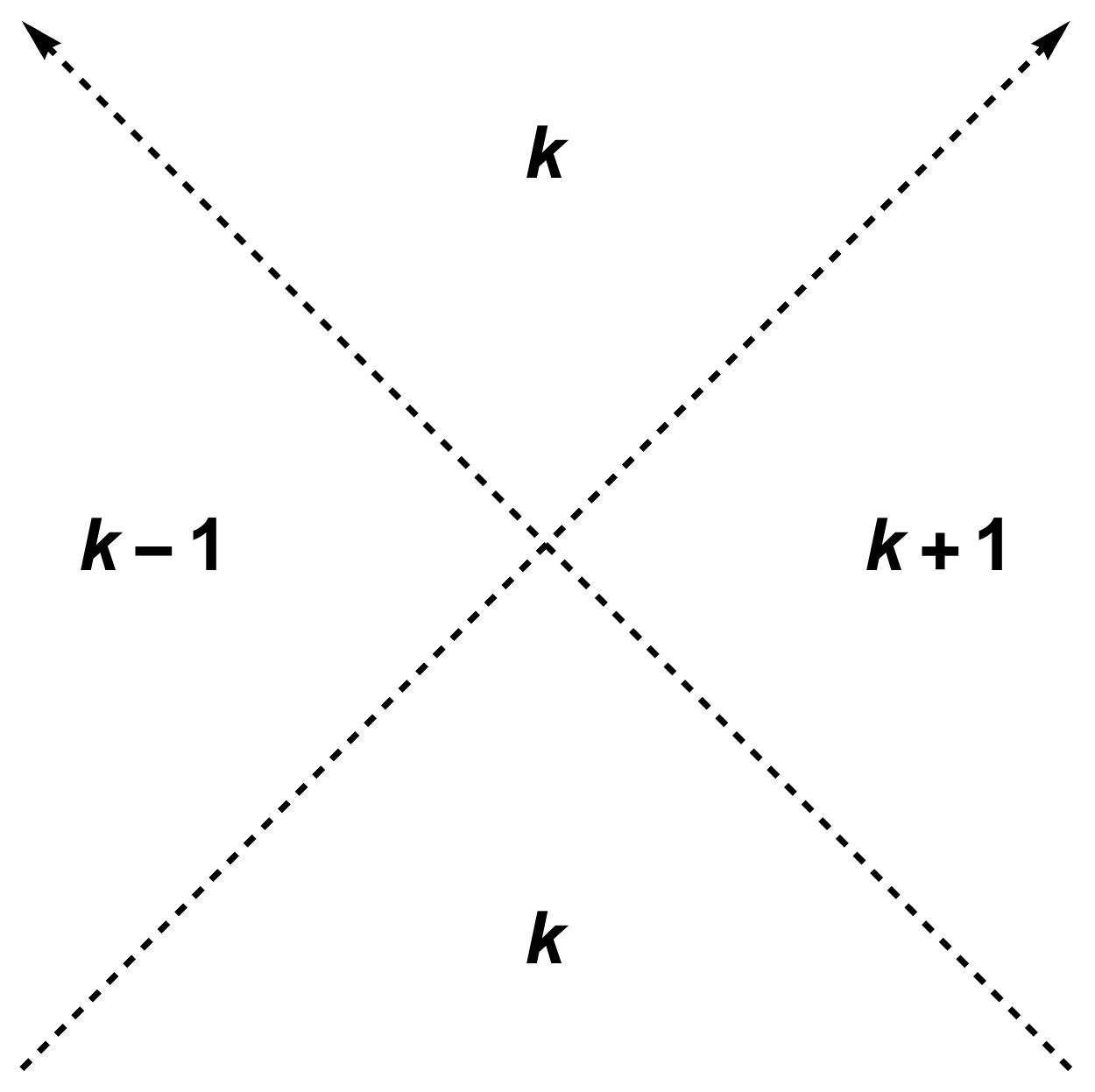}
\hskip8mm
\includegraphics[width=30mm]{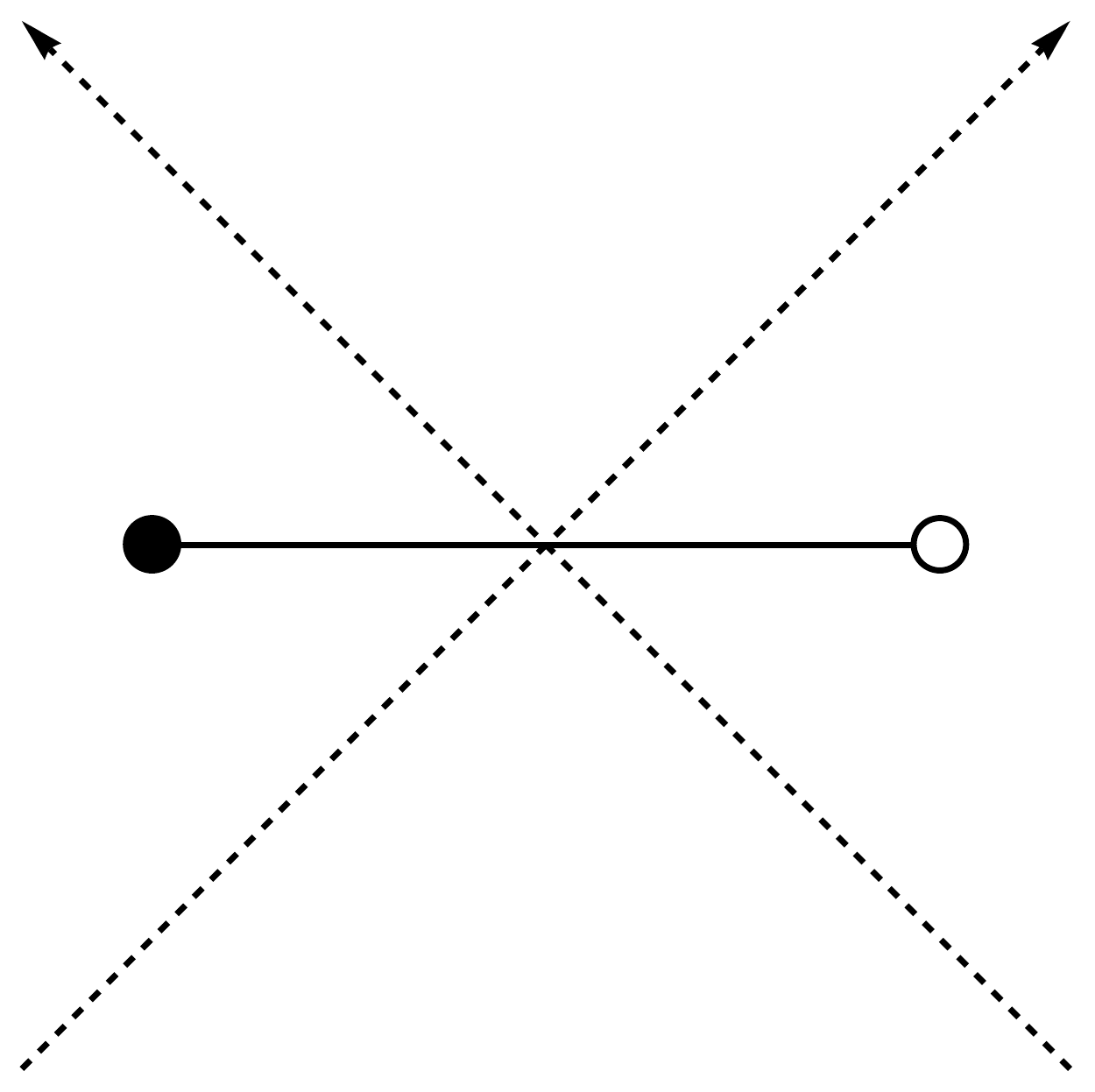}
\hskip8mm
\includegraphics[width=30mm]{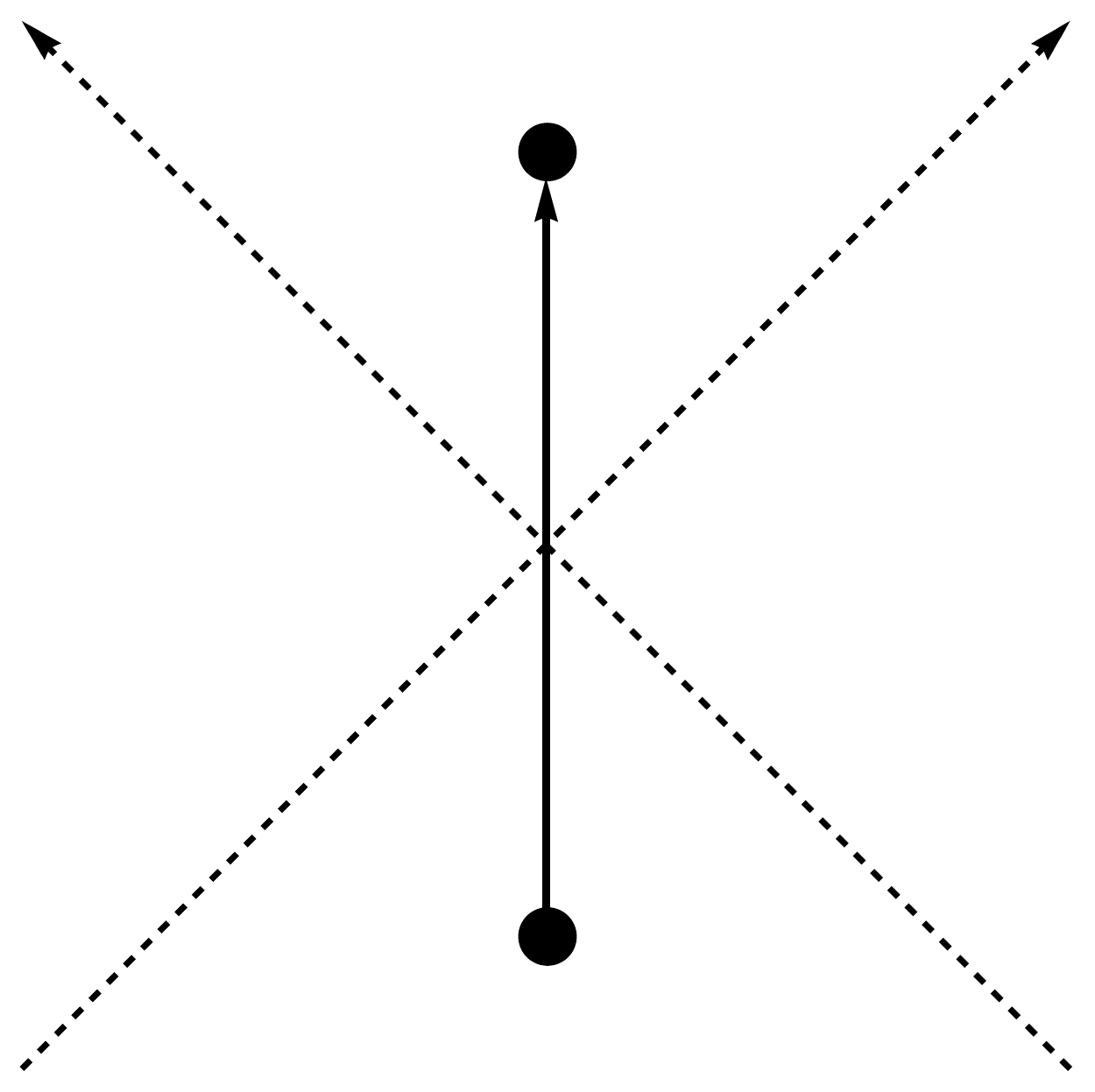}
\caption{
A generic intersection point of the shell $\H$ with the indices of Johansson's index map $\iota$,
the undirected edge of the dimer model, and the directed edge of the quiver.
}
\label{fig:indexmap}
\end{figure}

Before describing the algorithm, let us have a brief look at a generic intersection point $p$
of two oriented hyperplanes of the shell $\H$, as seen in Figure~\ref{fig:indexmap}.
Two of the four adjacent polygons, say $P_1$ and $P_2$, fulfill that $\iota(P_1) = \iota(P_2)$.
For one of these two polygons, say $P_1$, its edges are oriented towards $p$;
for the second polygon, $P_2$, its edges are oriented outwards from $p$.
That is, the shell $\H$ determines a natural orientation from $P_1$ to $P_2$,
as seen in the rightmost picture.

\begin{algorithm}[The index graph algorithm] 
\label{alg:IndexGraph}
$ $\\
\textbf{Input:} A pair $(\H, \iota)$ of a simple shell and its associated index map $\iota$.\\
\textbf{Output:} A mixed bicolored graph $G^- = (W,B,U,D)$, where $W$ and $B$ are the sets of white respectively black vertices, and $U$ and $D$ are the sets of undirected respectively directed edges.
\begin{enumerate}[1:]
\item \textbf{for} each $P\in \pi_0\left(\T^2\setminus\H\right)$ such that $\iota(P) \equiv 1$ modulo $4$ \textbf{do}
\item  add a white vertex $v = v(P)\in W$ 
\item \textbf{end for}.
\item \textbf{for}
For each $P\in \pi_0\left(\T^2\setminus\H\right)$ such that $\iota(P) \equiv -1$ modulo $4$ \textbf{do}
\item add a black vertex $v = v(P)\in B$
\item \textbf{end for}
\item \textbf{for} each intersection point $p \in P_1\cap P_2$ of polygons $P_1, P_2 \in \pi_0\left(\T^2\setminus\H\right)$ with $\iota(P_1) \equiv \iota(P_2) \equiv 1$ modulo $2$ \textbf{do}
\item $\quad$ \textbf{if} $\iota(P_1) \equiv \iota(P_2)$ modulo $4$  (with the orientation induced
by $\H$ going from $P_1$ to $P_2$) \textbf{then}
\item $\quad$ add a directed edge $e(P_1, P_2) = (v(P_1), v(P_2)) \in D$
\item $\quad$ \textbf{else}
\item $\quad$ add an undirected edge $e(P_1, P_2) = (v(P_1), v(P_2)) \in U$
\item $\quad$ \textbf{end if}
\item \textbf{end for}
\end{enumerate}
\end{algorithm}
We call the graph $G^-$ the \emph{index graph} associated with $(\H, \iota)$. 
Shifting all congruences by one, we obtain \emph{the even index graph algorithm}, 
whose output $G^+$ is called the \emph{even index graph} associated with the pair $(\H, \iota)$.

\begin{remark}
\label{rem:embedding}
The graphs $G^-$ and $G^+$ have a natural embedding into the torus $\T^2$,
provided that we allow for edges to be embedded as piecewise smooth curves;
map the vertex $v(P)$ to the center of mass of the polygon $P$, and
map the edge $e(P_1, P_2)$ to the union of the line segments from the centers of mass
of the polygons $P_1$ and $P_2$ to the corresponding intersection point $p$ of $P_1$
and $P_2$. In the case that $G^-$ is bipartite, it is known that
this embedding is isoradial, see \cite{UY11}.
\end{remark}

The underlying graphs (i.e., the graphs obtained by forgetting both the coloring and the orientations
of the directed edges) of $G^-$ and $G^+$ are dual as graphs embedded in $\T^2$.
In the case that $G^-$ is a bipartite graph, whose edges all are undirected, the graph $G^+$
is the dual quiver. In this case, the duality also respects the directions of edges;
the dual edge of an edge in the bipartite graph will be directed so that the black vertex lies on its left.
In the general case, the direction of the dual edge depends non-trivially on the indices $\iota(P)$.
Hence, any formulation the relation between $G^-$ and $G^+$ as a duality of
mixed graphs must contain all information encoded by the index map $\iota$. 
Such a formulation is not necessary for our purposes; we will work directly with $\iota$.

That either $G^-$ or $G^+$ is a bipartite graph is equivalent to that $\H$ is an \emph{admissible}
hyperplane arrangement, see \cite{UY11, UY13}. By definition, $\H$ (viewed as a polyhedral cell complex
with oriented edges) is said to be admissible if each edge bounds an oriented region.

\begin{figure}
\includegraphics[width=30mm]{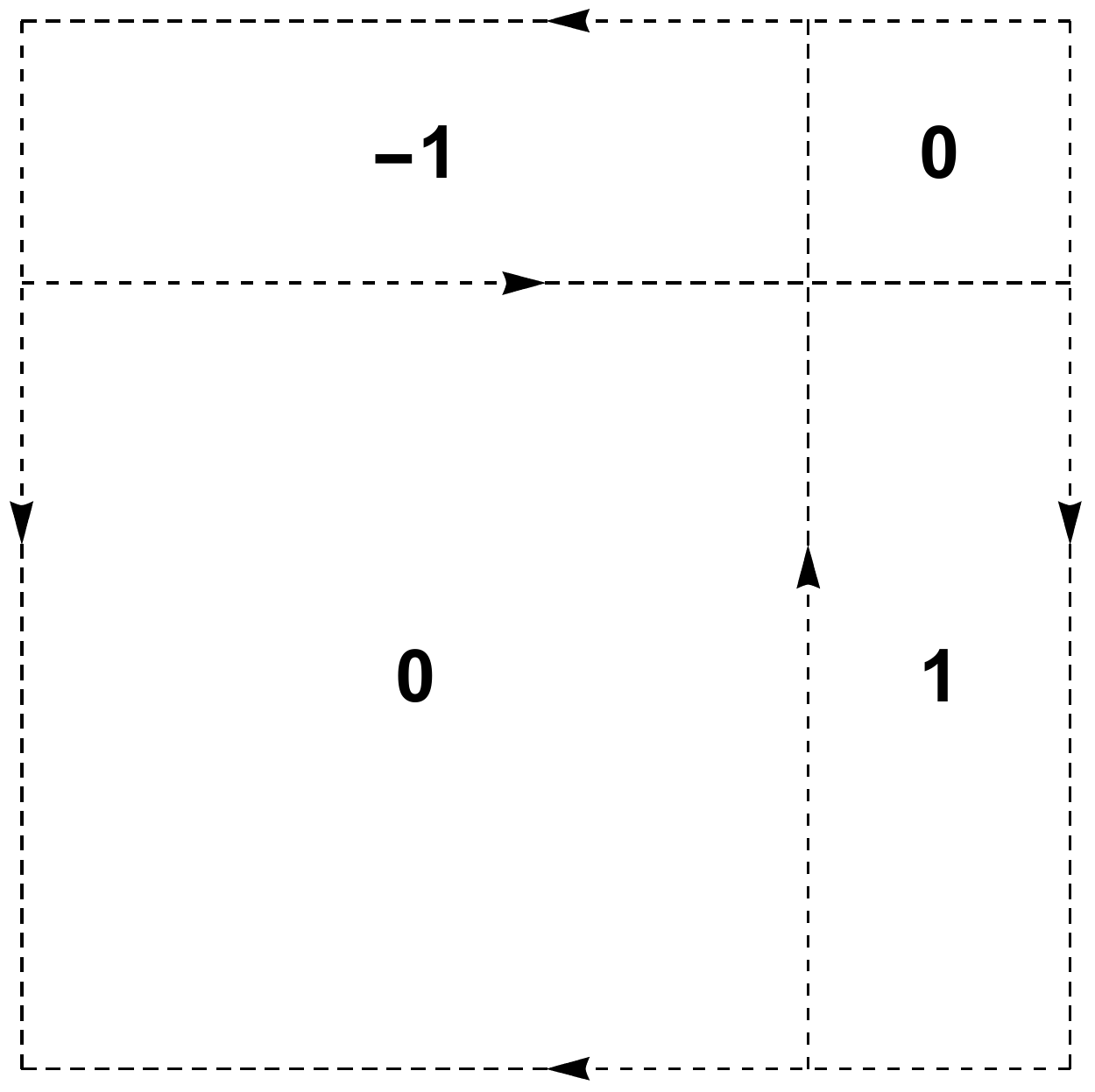}
\hskip8mm
\includegraphics[width=30mm]{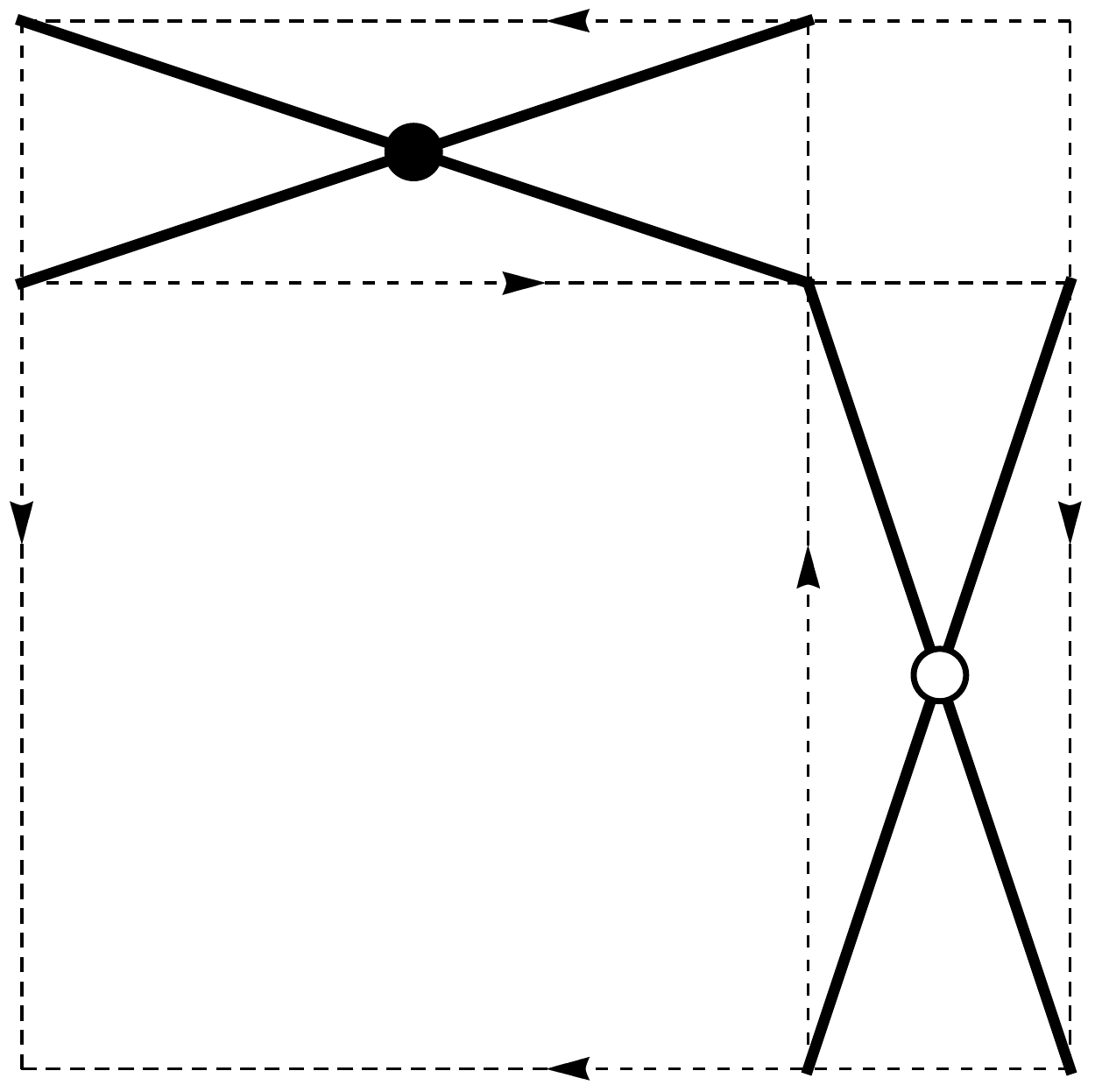}
\hskip8mm
\includegraphics[width=30mm]{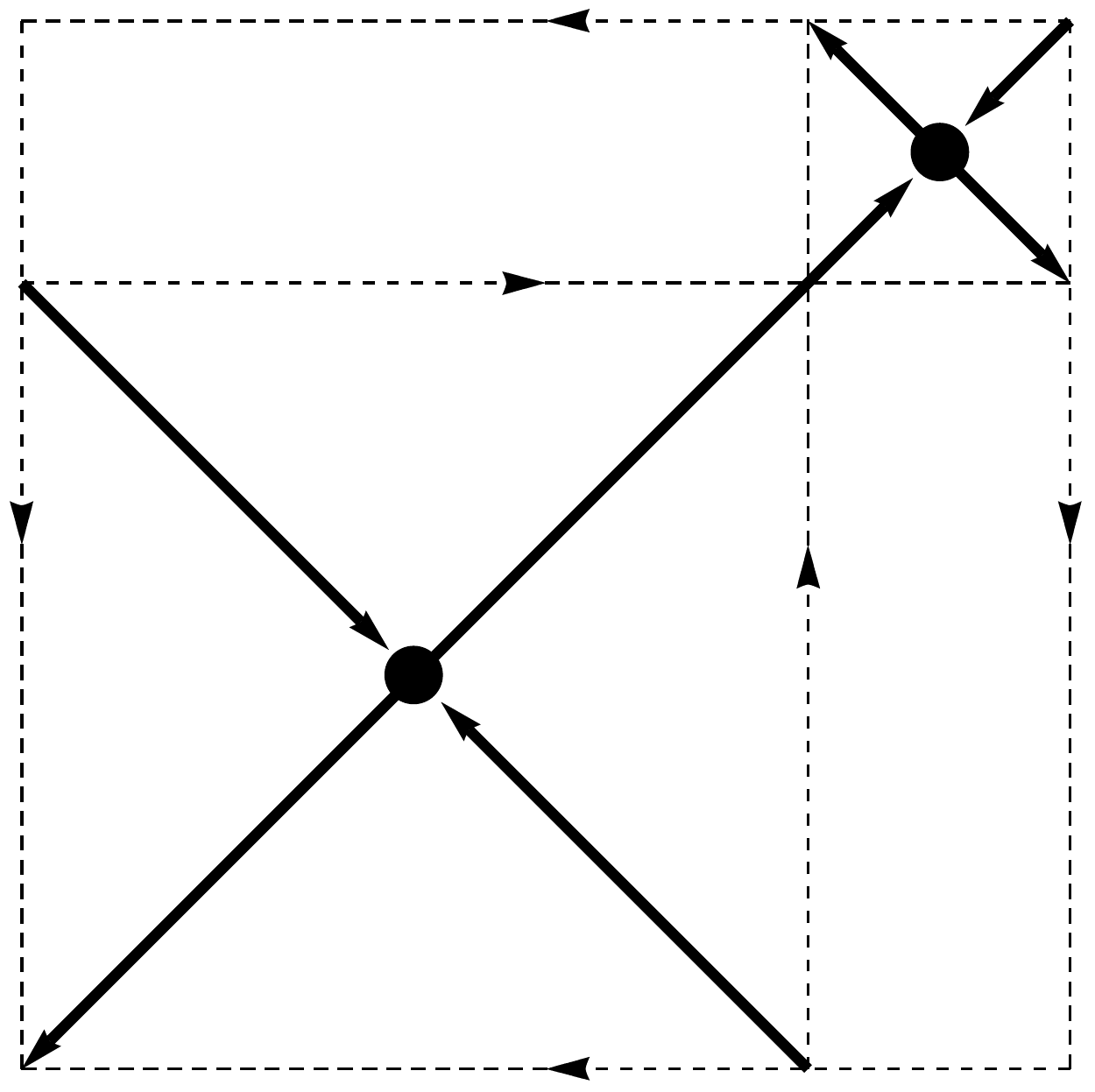}
\caption{
The odd and even index graphs defined by the shell $\H$ of the polynomial $f(z,w)$
from Example~\ref{ex:SquareEx}.
}
\label{fig:SquareEx}
\end{figure}

\begin{example}
\label{ex:SquareEx}
Let us consider the polynomial
\[
f(z,w) = 1 + z + w + i z w.
\]
The shell $\H$ and the indices $\iota(P)$ can be seen in the leftmost picture in 
Figure~\ref{fig:SquareEx}. In this case, the index graph $G^-$ is a bipartite graph and
$G^+$ is its dual quiver.
\end{example}

\subsection{Yang--Baxter modifications}
\label{ssec:YangBaxter}
Let us consider how a small modification of the hyperplane arrangement $\H$ locally acts on
the mixed graph $G^-$ obtained through the odd index graph algorithm. 
These actions are known in the physics literature as 
\emph{Yang--Baxter modifications} \cite{HV07}. It is not hard to show that, up to graph isomorphisms, 
there are two distinct modifications of the mixed graphs $G^-$ and $G^+$, shown
in Figure \ref {fig:Mod}. Note, however, that if we require that $\H$ is a hyperplane arrangement, 
then not all Yang--Baxter modifications of
the graph $G^-$ can be realized by perturbations of the hyperplanes in $\H$. 
 
\begin{figure}
\begin{tabular}{ccccccc}
\raisebox{-.5\height}{\includegraphics[width=15mm]{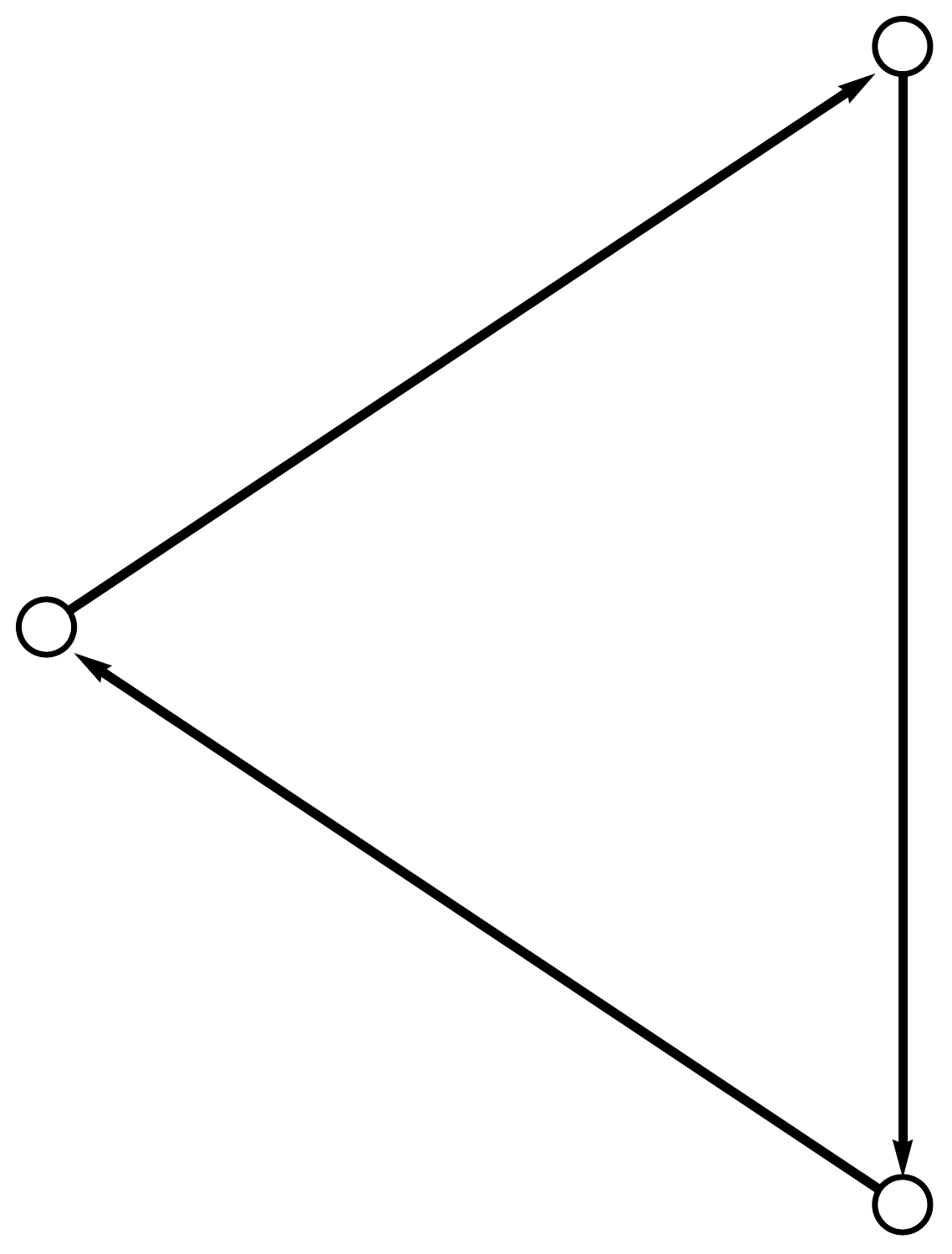}}
&
\hskip2mm
$\longleftrightarrow$
&
\hskip2mm
\raisebox{-.5\height}{\includegraphics[width=15mm]{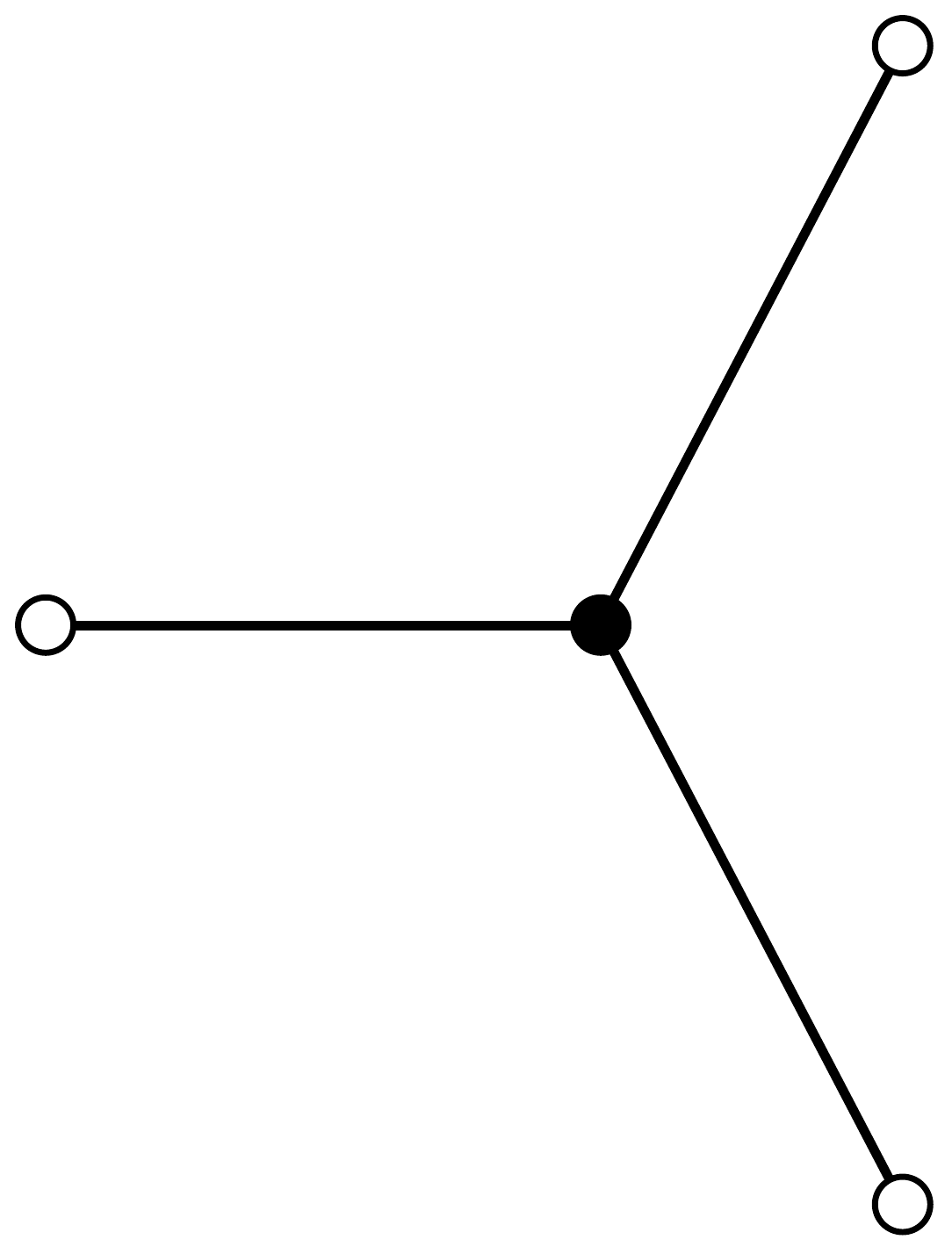}}
&
\hskip12mm
$ $
&
\raisebox{-.5\height}{\includegraphics[width=15mm]{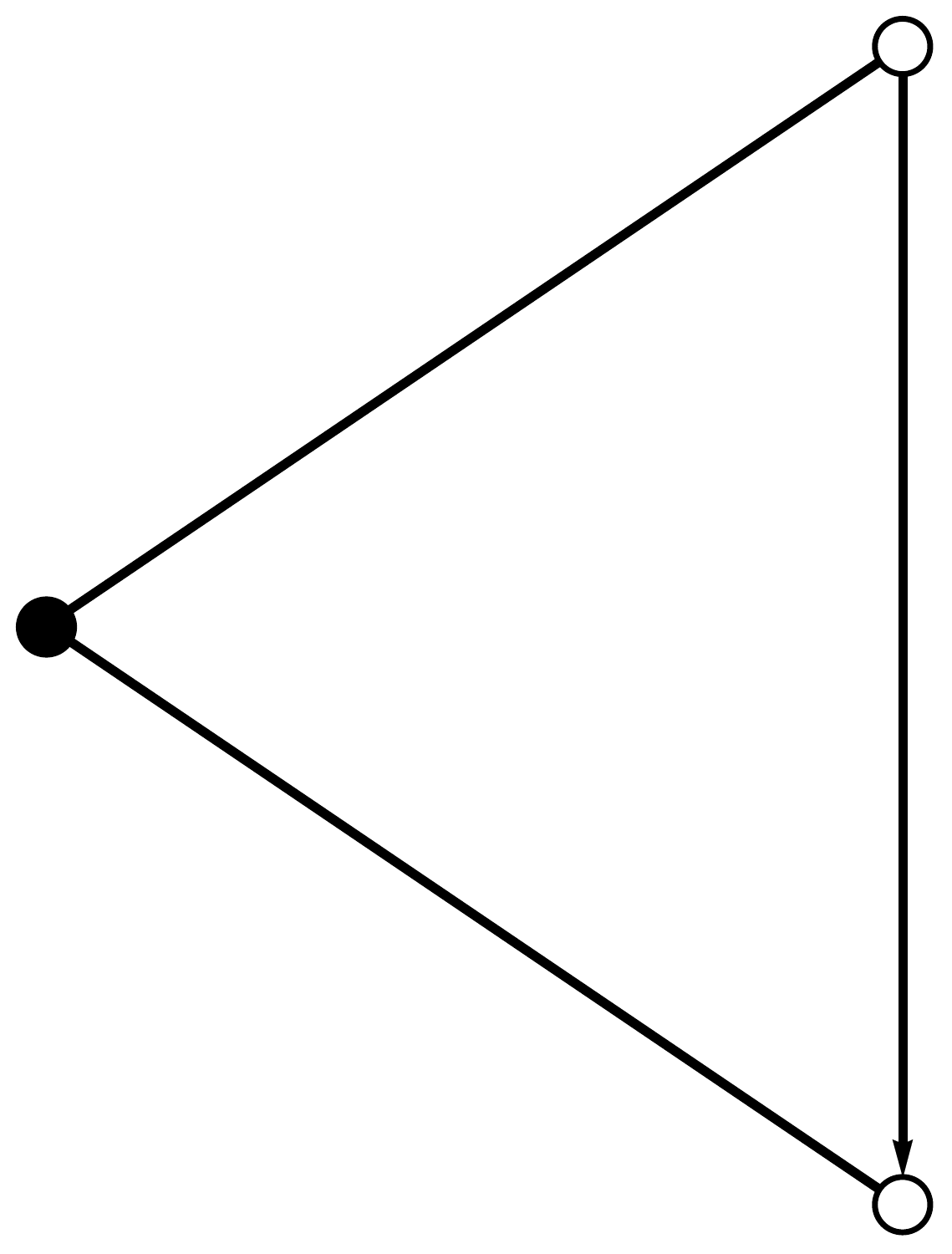}}
&
\hskip2mm
$\longleftrightarrow$
&
\hskip2mm
\raisebox{-.5\height}{\includegraphics[width=15mm]{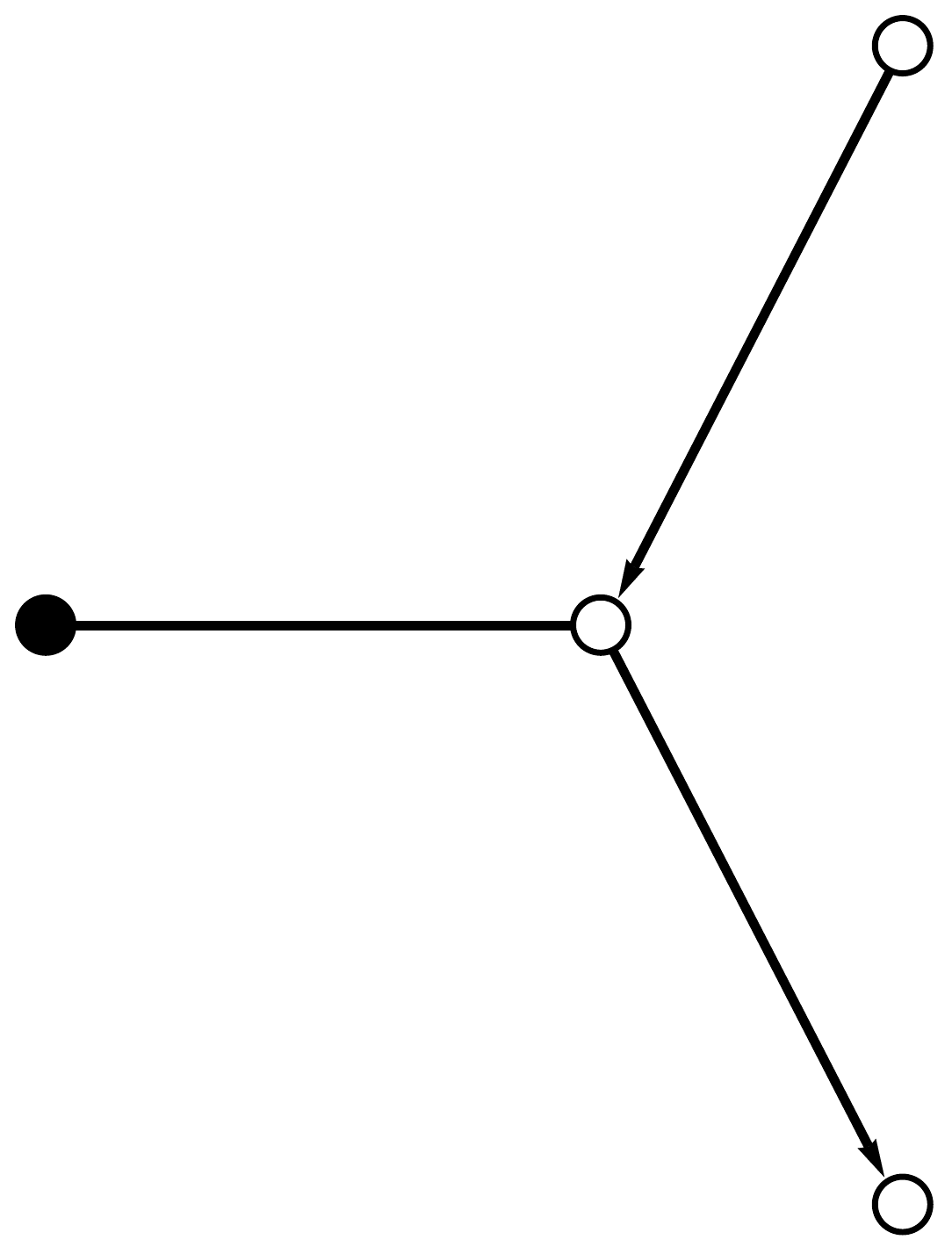}}
\\

\\
\raisebox{-.5\height}{\includegraphics[width=15mm]{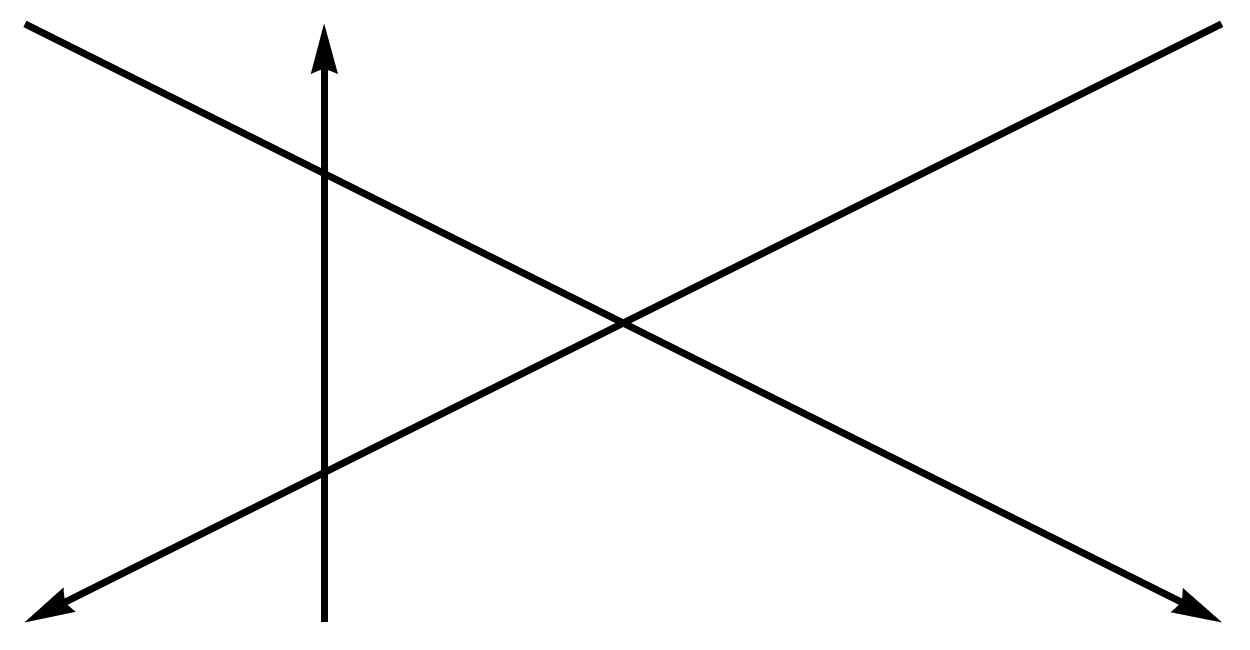}}
&
\hskip2mm
$\longleftrightarrow$
&
\hskip2mm
\raisebox{-.5\height}{\includegraphics[width=15mm]{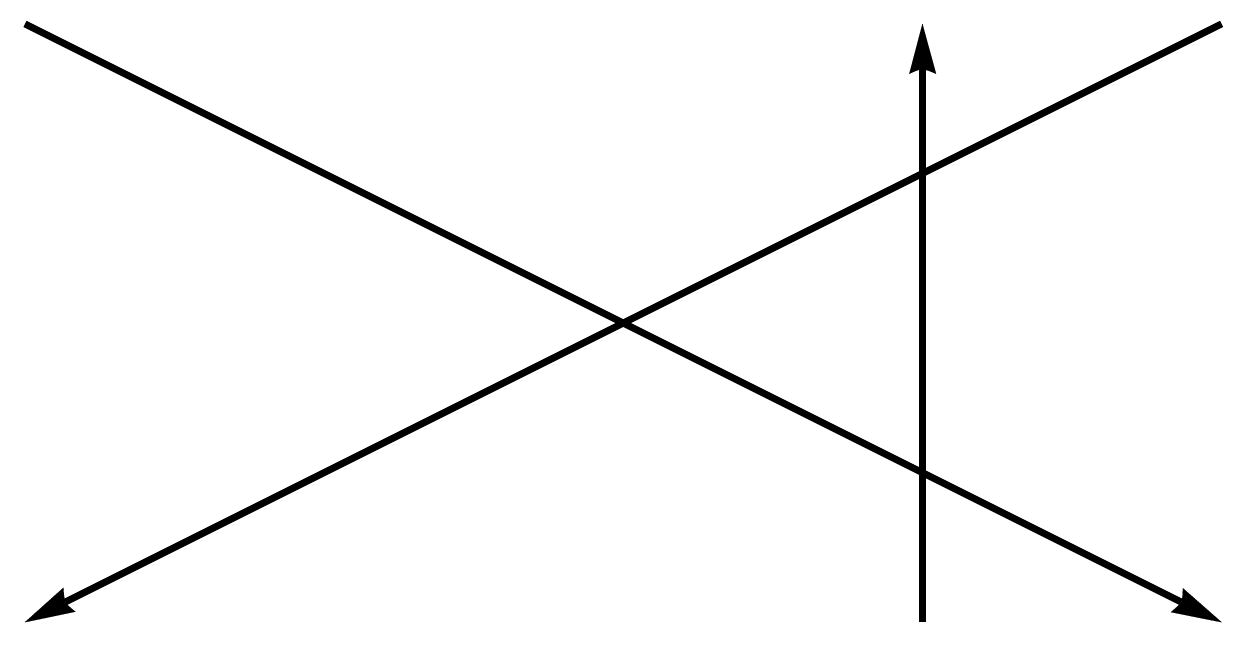}}
&
\hskip12mm
$ $
&
\raisebox{-.5\height}{\includegraphics[width=15mm]{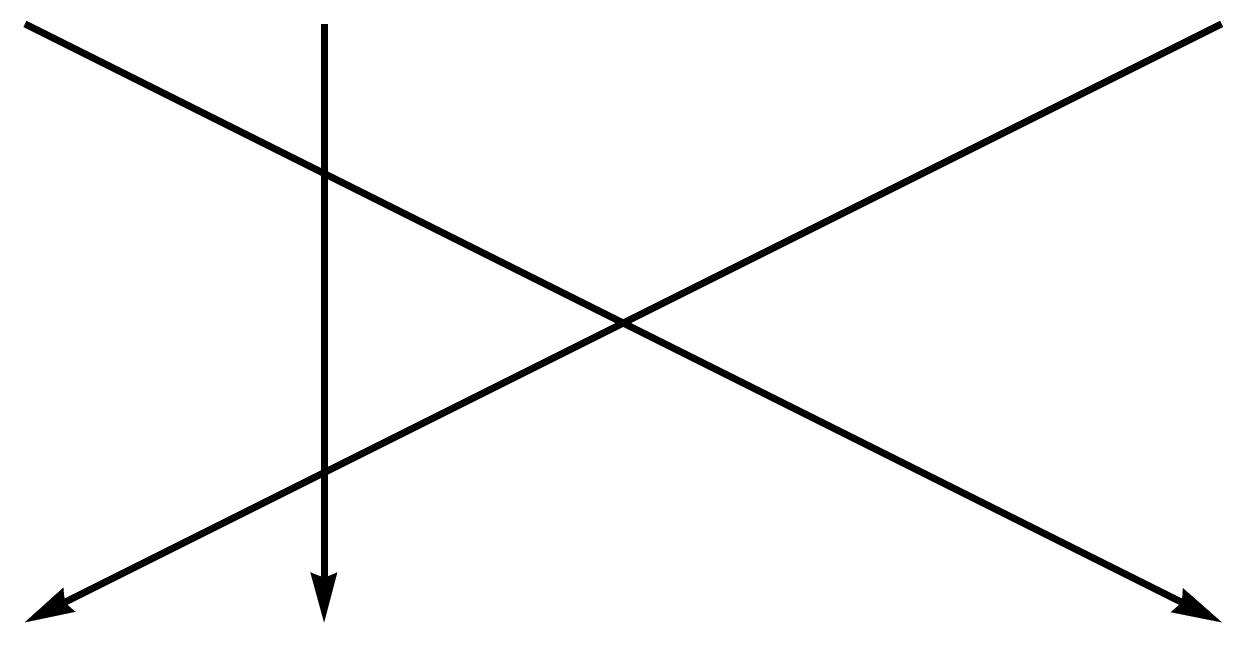}}
&
\hskip2mm
$\longleftrightarrow$
&
\hskip2mm
\raisebox{-.5\height}{\includegraphics[width=15mm]{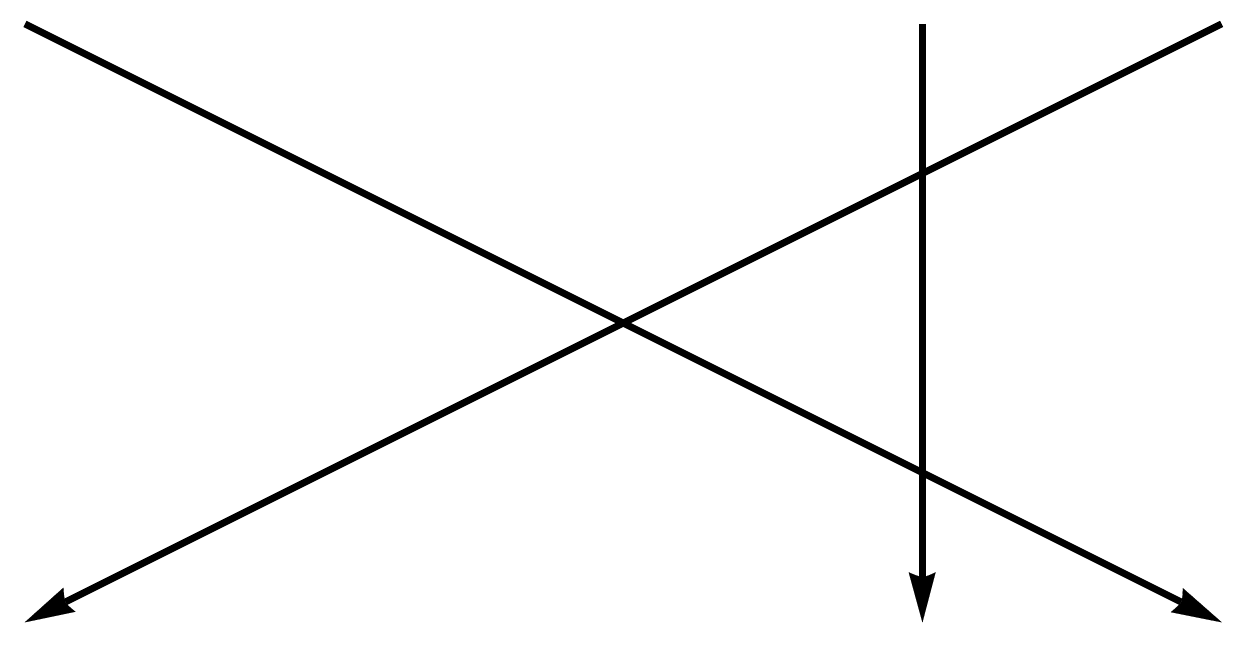}}
\end{tabular}
\caption{Above: the first and second Yang--Baxter modifications. Below: the corresponding local deformations of 
the hyperplane arrangement $\H$.}
\label{fig:Mod}
\end{figure}

We only need the first Yang--Baxter modification to prove our main theorem. 
This modification applies in the following situation. Assume that the arrangement
$\H$ has a cell $P \in \pi_o(\T^2\setminus \H)$, of even index, 
which is a triangle with oriented boundary.
The action on the graph is induced by locally deforming one of the 
hyperplanes in $\H$, forming instead a triangle with non-oriented boundary.
The cell $P$ is by this deformation replaced by a cell $Q$ of odd
index. The corresponding modification of $G^-$ can be seen in the uppermost
left picture in Figure \ref{fig:Mod}.

\begin{remark}
The algorithm of Hanany--Vegh \cite{HV07} depends on the one is provided an admissible
arrangement $\H$. The algorithm of Stienstra \cite{Sti08} takes an arbitrary simple arrangement
as input, and searches along a tree of Yang--Baxter modifications for a simple arrangement.
Algorithm~\ref{alg:IndexGraph} is equivalent to Stienstra's algorithm, which the crucial difference
that we perform the steps in a different order. We construct, from the arrangement $\H$ a mized graph
$G$, and then we perform Yang--Baxter modifications. In examples, it suffices to
perform Yang--Baxter modifications which reduce the number of directed edges. This greatly reduces
the number of trial-and-errors required.
\end{remark} 
 
 \begin{proposition}
 \label{pro:Consistent}
Let $\H$ be a dual hyperplane arrangement of the polygon $\N$, and let $\iota$
be its index map.
Assume that $|\iota(P)| \leq 2$ and that each polygon $P$ with $|\iota(P)| = 2$ 
is a triangle.
Then, the dimer model $G$ obtained from $\H$ by 
the index graph algorithm and Yang--Baxter modificantions
is a consistent dimer model with characteristic polygon $\N$.
 \end{proposition}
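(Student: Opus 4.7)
The plan is to use Yang--Baxter modifications to reduce the problem to the admissible case, and then invoke the known results of Hanany--Vegh and Ueda--Yamazaki. First, I would apply the first Yang--Baxter modification (upper left of Figure~\ref{fig:Mod}) at every cell $P$ with $|\iota(P)| = 2$. By hypothesis each such $P$ is a triangle, and since indices of adjacent cells differ by exactly one, no two cells with $|\iota|=2$ can share an edge, so these triangles are pairwise edge-disjoint. Using the crossing rule from \S3.1, one checks that the local deformation replaces $P$ (of index $\pm 2$) with a triangle $Q$ of index $\pm 1$ and alters the indices of the neighboring cells by at most one. Performing all such modifications (they are compactly supported and the triangles are edge-disjoint, so the order is irrelevant), we obtain a new arrangement $\H'$ whose index map $\iota'$ satisfies $|\iota'(P)|\leq 1$ for every cell.

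Second, as noted in the introduction, $|\iota'(P)| \leq 1$ is equivalent to $\H'$ being admissible, and consequently Algorithm~\ref{alg:IndexGraph} applied to $\H'$ produces a graph with no directed edges, namely a bipartite graph $G$ embedded in $\T^2$ via Remark~\ref{rem:embedding}. It remains to identify the characteristic polygon and check consistency. Yang--Baxter modifications are local and preserve the homology classes of the hyperplane families $\H_k$, so $\H'$ remains a dual arrangement of $\N$ with $\hat h(\H'_k) = \Gamma_k$ for each facet $\Gamma_k$. The zigzag paths of $G$ coincide with the lines of $\H'$, and the characteristic polygon of a dimer model is determined by the homology classes of its zigzag paths; hence the characteristic polygon of $G$ equals $\N$. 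Consistency then follows from the standard result (see \cite{UY11, UY13}) that a bipartite graph arising from an admissible dual arrangement of a lattice polygon admits an isoradial embedding, and in particular is consistent.

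The main obstacle will be the careful verification, via the crossing rule, that the Yang--Baxter modification at a triangle of index $\pm 2$ truly reduces the maximum absolute value of $\iota$ throughout a neighborhood of the deformed region; a complete case analysis, keyed on the orientations of the three bounding hyperplanes of $P$, is required. A secondary concern is that index-$\pm 2$ triangles may share vertices, and one must confirm that the corresponding deformations can be chosen to be compactly supported in pairwise disjoint neighborhoods, so that the modifications genuinely commute and the final arrangement $\H'$ is well defined.
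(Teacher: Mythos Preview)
Your overall strategy—apply the first Yang--Baxter modification at each index-$\pm 2$ triangle to reach an admissible arrangement, then invoke known results—matches the paper's in spirit, but the final consistency step has a genuine gap.

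After the Yang--Baxter modifications, $\H'$ is no longer a hyperplane arrangement: it is an arrangement of \emph{piecewise linear} curves. The isoradial embedding statement you cite from \cite{UY11, UY13} is formulated for admissible arrangements of straight lines, so it does not apply to $\H'$ as stated. More importantly, admissibility alone does \emph{not} imply consistency for arrangements of piecewise linear curves; one can easily produce admissible piecewise linear arrangements whose zigzag paths self-intersect in the universal cover or cross one another twice in the same direction. Thus the implication ``admissible $\Rightarrow$ isoradial $\Rightarrow$ consistent'' is not available in the generality you need.

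The paper closes this gap differently. It invokes the characterization of consistency from \cite[Definition 3.5]{IU11}, which \emph{is} stated for admissible arrangements of piecewise linear curves: consistency is equivalent to the three conditions (no curve has trivial homology, no curve self-intersects in $\RR^2$, no two curves cross twice in the same direction in $\RR^2$). The key observation—absent from your argument—is that applying the first Yang--Baxter modification at a triangle with oriented boundary leaves the number of intersection points between every pair of curves in $\H$ invariant. Since a genuine dual hyperplane arrangement trivially satisfies the three Ishii--Ueda conditions, and the modifications preserve pairwise intersection counts, the modified arrangement $\H'$ satisfies them as well, and consistency follows. Your worries about the local index computation and about vertex-sharing triangles are legitimate bookkeeping issues, but the essential missing ingredient is this intersection-number invariance, which is what lets one transport the straight-line properties across the Yang--Baxter moves. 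The identification of the characteristic polygon via zigzag homology is essentially the same in both approaches; the paper cites \cite{FHKV08} for this.
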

 
 \begin{proof}
As defined in \cite[Definition 3.5]{IU11},
a dimer model $G$ obtained from an admissible arrangement of piecewise linear
curves $\H$ is consistent if and only if
\begin{enumerate}[$a$)]
\item No closed piecewise linear curve $H\in \H$ has $\hat h(H) = 0$.
\item No closed piecewise linear curve $H$ has a self-intersection in the universal cover $\RR^2$ of $\T^2$.
\item No two closed piecewise linear curves $H_1$ and $H_2$ in $\H$ intersect in the universal cover $\RR^2$
of $\T^2$ in the same direction more than once.
\end{enumerate} 
If we generalize from hyperplanes to piecewise linear curves, then we can act on $G$ by applying
Yang--Baxter modifications locally.
We note that $|\iota(P)|$ is a local maximum then the boundary of $P$ is oriented. 
If, futhermore,
$P$ is a triangle, then applying the first Yang--Baxter modification leaves
the number of intersection points between any pair of lines in $\H$ invariant. Hence, the dimer model $G$
is consistent, as the properties $a$)--$c$) are fulfilled by any dual hyperplane arrangement $\H$ of the polygon $\N$.
Finally, that the characteristic polygon of the dimer model $G$ is equal to $\N$ follows from \cite{FHKV08}.
\end{proof}
 
\begin{remark}
It was shown in \cite{IU11} that the dimer model $G$ is consistent if and only if it is properly ordered
in the sense of Gulotta, and 
it was shown in \cite[Theorem 3.1]{Gul08} that if a dimer model is properly ordered,
then the number of two-dimensional faces is equal to $2\area(\N)$.
In particular, with $\H$ and $\N$ as in Proposition \ref{pro:Consistent},
the admissible arrangement of pieceswise smooth curves obtained from $\H$ by applying
Yang--Baxter modifications has $2\area(\N)$-many cells of index zero.
\end{remark}

\section{Intermezzo: Polygons with no admissible dual hyperplane arrangements}
If $\H$ is an admissible hyperplane arrangement, then the linear Hanany--Vegh algorithm
constructs a dimer model $G$ on the torus $\T^2$. 
We will now answer the question of whether one can always find an admissible hyperplane arrangement $\H$,
which has been raised on several occasions (see, e.g.,  \cite[Remark 6.10]{Sti08}),
in the negative.
Note that if we allow piecewise smooth curves then, 
by Gulotta's algorithm \cite{Gul08}, one can construct an admissible arrangement.
However, since the shell $\H$ is a hyperplane arrangement, such a generalization
is not suitable for our purposes.

\begin{figure}
\begin{tabular}{ccc}
\raisebox{-.5\height}{\includegraphics[width=30mm]{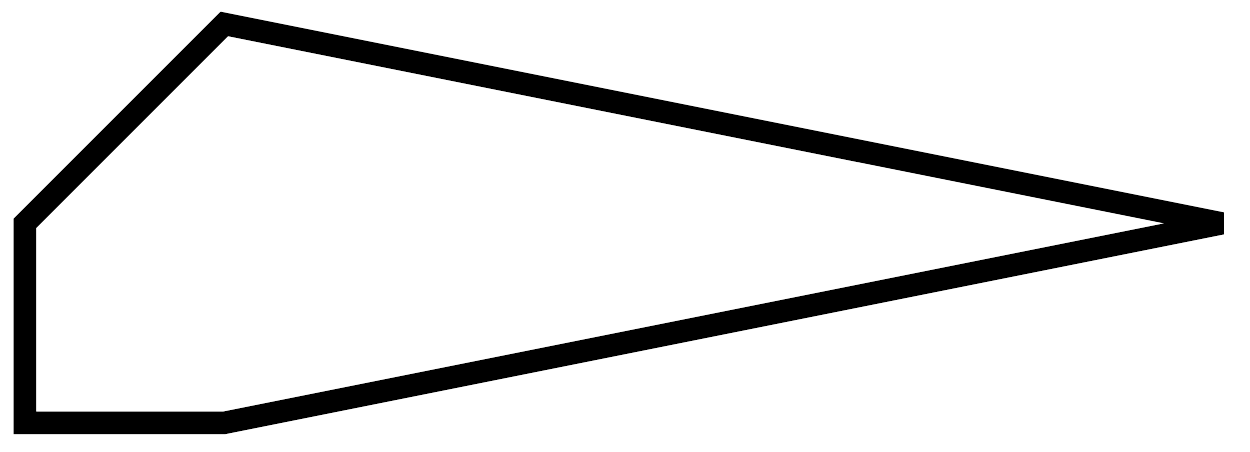}}
&
\hskip8mm
\hskip8mm
\raisebox{-.5\height}{\includegraphics[width=30mm]{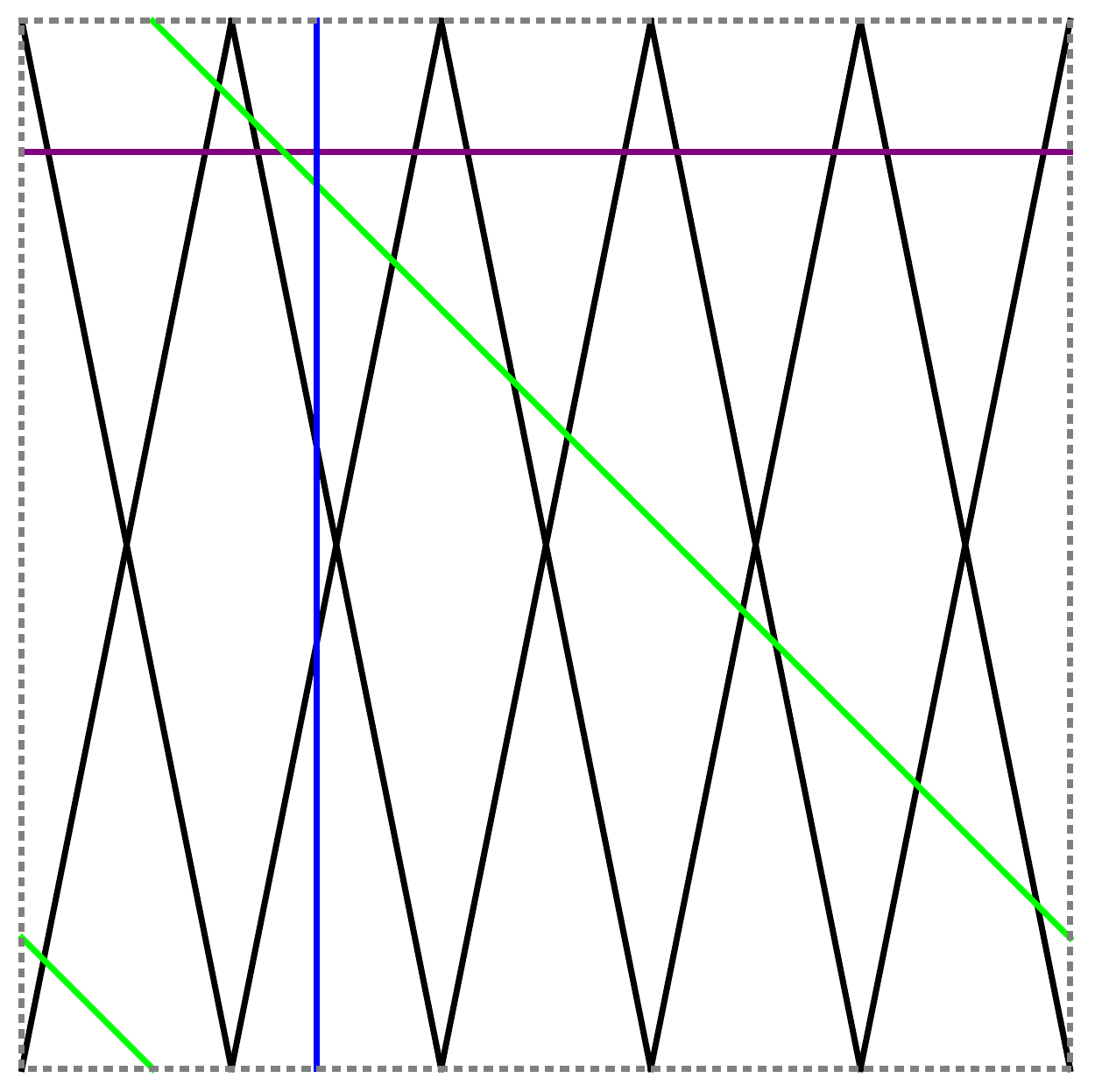}}
\end{tabular}
\caption{Left: the polygon from Example~\ref{ex:HananyVeghExample}.
Right: a non-admissible arrangement. The two tiers of the arrangement $\H_2$ is seen in black.
The hyperplane $H_1$, in green, intersects three of the line segments $\ell_j$ on the first tier, and three on the second tier.
One line segment $\ell_j$ does not intersect $\H_1$.}
\label{fig:NoAdmissibleEx}
\end{figure}
 
\begin{example}
\label{ex:HananyVeghExample}
Consider the polygon $\N$ with vertices $(0,0)$, $(1,0)$, $(0,1)$, $(k+1,1)$, and $(1,2)$,
for some $k\in \ZZ$ with $k > 1$. 
The edges of $\N$ are $\Gamma_1 = (-1,-1)$, $\Gamma_2= (0,-1)$, $\Gamma_3 = (1,0)$, $\Gamma_4 = (k, 1)$, 
and $\Gamma_5 = (-k, 1)$. Let $H_1, \dots, H_5$ denote the corresponding hyperplanes in $\T^2$,
and define $\H_1 = H_1\cup H_2\cup H_3$ and $\H_2 =  H_4 \cup H_5$,
so that, as a set, $\H = \H_1 \cup \H_2$.

The arrangement $\H_2$ is a Lozenge tiling of $\T^2$, see the black arrangement in Figure~\ref{fig:NoAdmissibleEx}.
It divides the hyperplane $H_5$ into $2k$-many parallel line segments $\ell_j$ for $j=1, \dots, 2k$. 
It is easy to see that if one of these line segments $\ell_j$
does not intersect $\H_1$, then it does not bound an oriented region of $\T^2\setminus\H$.
We make three remarks.
Firstly, the line segments $\ell_j$ can be divided into two \emph{tiers}; the upper tier consists of the $k$ segments
that intersect $H_2$, the bottom tier consists of the remaining $k$ segments,
see Figure~\ref{fig:NoAdmissibleEx}.
Secondly, the hyperplane $H_1$ intersects at most $\big\lceil\frac{k+1}2\big\rceil$-many
line segments on each tier. Thirdly, the hyperplane $H_3$ intersects exactly one of the line segments $\ell_j$.
Let
\[
m = \left\lceil\frac{k+1}2\right\rceil + 1.
\]
It follows that at most $m$ of the line segments on the bottom tier intersect $\H_1$. In particular,
if $m < k$, then there is no dual admissible hyperplane arrangement $\H$ of the polygon $\N$.
We can conclude that if $k \geq 5$, then $\N$ has no dual admissible hyperplane arrangement.
That is, if $k \geq 5$ then it is not possible to construct a dimer model whose characteristic
polygon is $\N$ using the Hanany--Vegh algorithm \cite{HV07} or Stienstra's algorithm \cite{Sti08}.

That a line segment $\ell_j$ does not intersect $\H_1$ is not the only possible obstruction for a hyperplane
arrangement $\H$ to be admissible. A computer aided inspection suggests that no 
admissible arrangements exist for $k \geq 3$. Admissible arrangements
exist in the cases $k=1$ and $k=2$.
\end{example}

\section{Dimer models and Coamoebas}
\label{sec:DimerCoamoebas}

In this section we will discuss the relation between coamoebas and dimer models.
The aim is to explain the observations from \cite{FU10, UY11, UY13} that 
the dimer model is a deformation retract of the coamoeba of the characteristic
polynomial. 
We need the following definition in order to atomize this observation.

\begin{definition}
Let $f\in (\CC^\times)^A$, and let $\H$ and $\iota$ be the shell and index map associated with the coamoeba $\C$. 
We define the
\emph{combinatorial coamoeba} $\D$ to be the set
\[
\D = \bigcup_{|\iota(P)| > 0} \overline{P},
\]
where the union is taken over all polygons $P \in \pi_0\left(\T^2\setminus \H\right)$ whose
index is of positive magnitude.
\end{definition}

It is a theorem of Johansson \cite[Theorem 5.1]{Joh13} that $\D \subset \overline{\C}$. Typically, this inclusion
is strict. Furthermore, \cite[Lemma 2.3]{FJ14} shows that there is an injective map
$\pi_0(\T^2\setminus \overline{\C}) \rightarrow \pi_0(\T^2\setminus \D)$ given by inclusion as subsets of $\T^2$.
In general, this map need not be surjective. 
Thus, the relation between $\overline{\C}$ and $\D$ is non-trivial.
Typically, in the case when  the dimer model $G$ is a deformation retract of the coamoeba $\overline{\C}$
two relations hold simultaneously. Firstly, $G$ is a deformation retract of $\D$ and, secondly,
it holds that $\pi_0(\T^2\setminus\D) = \pi_0(\T^2\setminus \overline{\C})$. We mention this as
$\D$, due to its combinatorial nature, is a more accessible object of study than $\overline{\C}$. 
In both cases, the cardinality of the $0$'th fundamental group $\pi_0$
is at most twice the area of the Newton polygon $\N$, see \cite{FJ15}.
To prove Theorem \ref{thm:DeformationRetract} we need the following preliminary
results regarding the combinatorial coamoeba $\D$ and the index map $\iota$.
We begin with an immediate consequence of
Algorithm~\ref{alg:IndexGraph}.

\begin{theorem}
\label{thm:P2GivesDeformationRetract}
Assume that $\H$ is such that\/ $|\iota(P)| \leq 2$ and for each $P$ with $|\iota(P)| = 2$
it holds that $P$ is a triangle. Then the dimer model $G$ obtained from $G^-$ using
Yang--Baxter modifications is a (strong) deformation retract of\/ $\D$.
\end{theorem}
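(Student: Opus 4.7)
My plan is to construct a strong deformation retract of $\D$ onto $G$ explicitly, proceeding cell-by-cell in the polygonal decomposition induced by $\H$ and gluing the local retracts along shared boundaries. The key observation is that after the Yang--Baxter modifications prescribed by the hypothesis, $G$ restricted to each cell of $\D$ is a star-shaped tree with leaves on the cell's boundary, so each cell collapses onto its star by a straight-line homotopy.

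First, I would embed $G$ into $\D$. Under the hypothesis, the cells of $\H$ contributing to $\D$ have $|\iota| \in \{1,2\}$, and each $|\iota|=2$ cell is a triangle $P$ whose three side-neighbors $S_1, S_2, S_3$ are forced by the crossing rule to have $|\iota|=1$. I place $v(P)$ at the barycenter of each $|\iota|=1$ cell, and each Yang--Baxter-introduced vertex $v(Q)$ at the barycenter of the corresponding $|\iota|=2$ triangle. Inherited $G^-$-edges are embedded through shell intersection points as in Remark~\ref{rem:embedding}; each Yang--Baxter-introduced edge $e(v(Q), v(S_i))$ is embedded as the piecewise-linear path from $v(Q)$ through the midpoint $m_i$ of the shared edge of $P$ and $S_i$ to $v(S_i)$. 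All vertices and edges so embedded lie in $\D$, and within each cell $P'$ contributing to $\D$, the intersection $G \cap P'$ is a star: a central vertex together with half-edges reaching $\partial P'$ at corners bearing inherited $G^-$-edges or at midpoints $m_i$ on edges shared with an adjacent $|\iota|=2$ triangle.

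Next, I would construct the retract cell-by-cell and glue. Each cell $P'$ of $\D$ is a convex polygon and $G \cap P'$ is a contractible tree; the half-edges partition $P'$ into sectors, and a straight-line homotopy towards the central vertex retracts each sector onto its two bounding half-edges, producing a strong deformation retract of $P'$ onto $G \cap P'$. Two adjacent cells in $\D$ meet either at a shell vertex---which either lies on a $G$-edge (fixed by both local retracts) or carries no $G$-edge (and can be consistently routed to an adjacent $G$-point)---or along a shared edge of $\H$ joining a $|\iota|=2$ triangle to a side-neighbor, in which case the shared edge is collapsed by both local retracts to the midpoint $m_i$ and its two endpoints. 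Choosing the sector decompositions on either side of each shared boundary compatibly produces a continuous global strong deformation retract $r \colon \D \times [0,1] \to \D$ with image $G$ at time $1$, fixing $G$ throughout.

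The main technical obstacle is this gluing step along shared edges within clusters (the configurations consisting of a $|\iota|=2$ triangle and its three side-neighbors), where the sector structure on either side of the shared edge must be matched so that points on the edge are mapped to the same $G$-point, namely the midpoint $m_i$ or a corner. Routing the sector boundaries through $m_i$ and the corners in both adjacent cells is the geometric fix, and the straight-line homotopy then provides enough flexibility to absorb the remaining matching. A secondary care point is the treatment of shell vertices not lying on any $G$-edge; these occur at corners whose opposite cell has $|\iota| \in \{0, 2\}$, and must be routed into $G$ along a nearby half-edge without breaking continuity.
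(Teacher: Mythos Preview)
Your approach is essentially the same as the paper's: embed $G$ in $\D$ by placing a vertex at the barycenter of every cell of nonzero index (cf.\ Figure~\ref{fig:P2GivesDeformationRetract}) and then observe that each convex cell carries a star onto which it retracts. The paper's proof is terser---it describes the embedding and simply asserts the deformation retract---whereas you spell out the sector-by-sector homotopy and the gluing; your choice to route the Yang--Baxter edges through edge midpoints rather than as straight segments (as the paper does) is a harmless variant that in fact makes the containment $G\subset\D$ more transparently true.
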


\begin{figure}
\includegraphics[width=30mm]{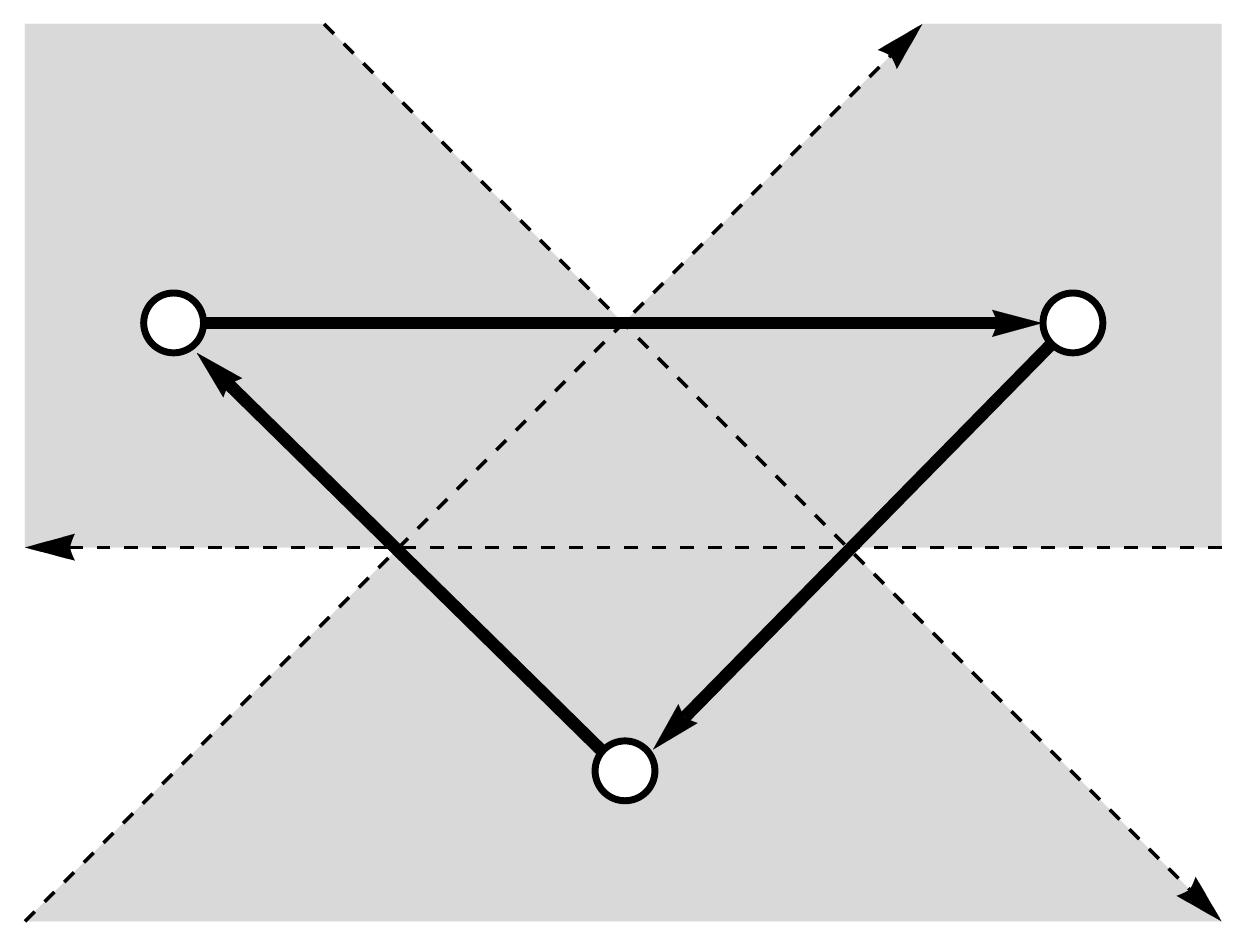}
\hskip8mm
\includegraphics[width=30mm]{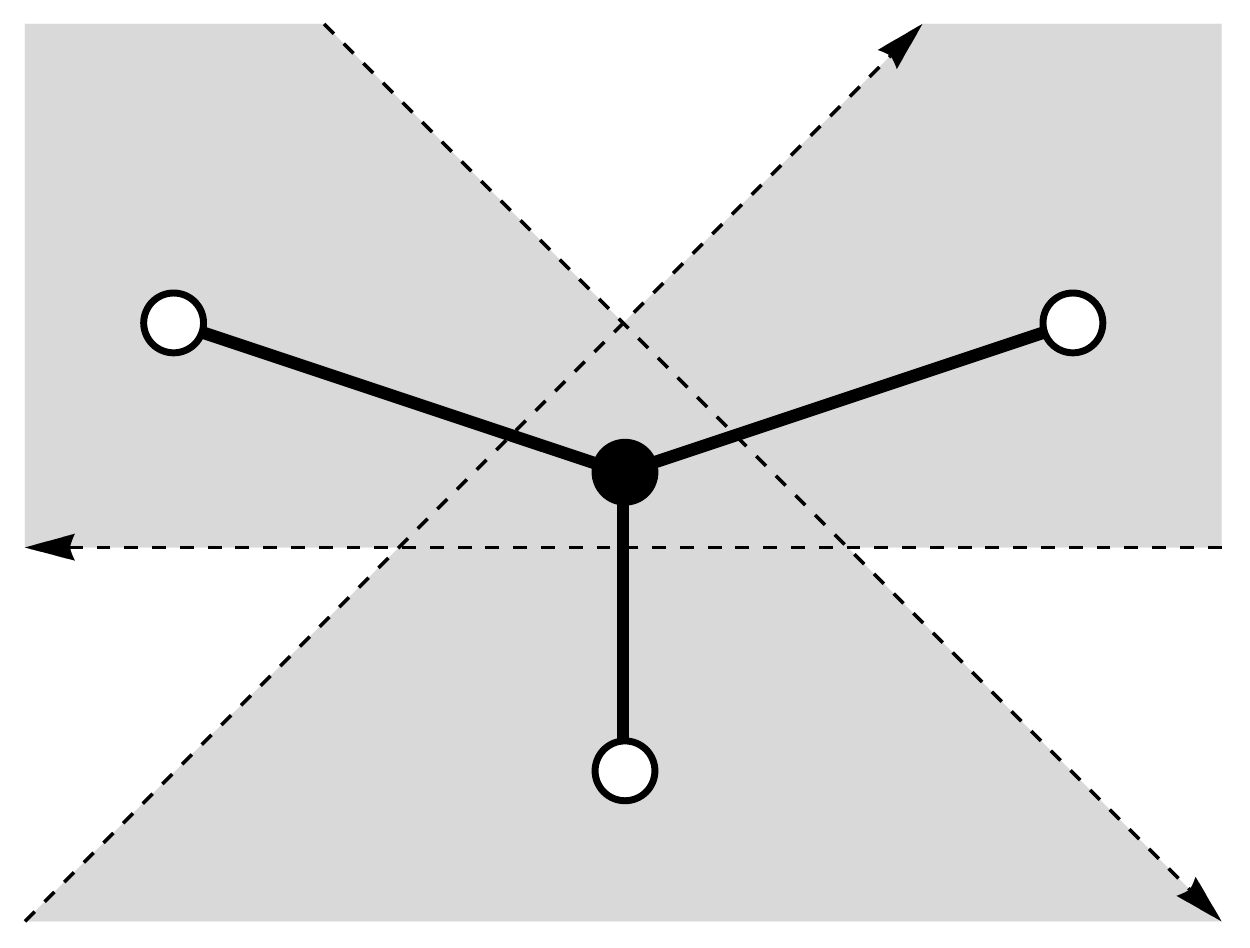}
\caption{
The graph $G^-$ and $\D(f)$, in the neighborhood of a triangular polygon $P$ of index two,
before and after applying the first Yang--Baxter modification.
}
\label{fig:P2GivesDeformationRetract}
\end{figure}

\begin{proof}
Notice that a cell $P$ with $|\iota(P)| = 2$ is, after applying the first Yang--Baxter modification,
associated to a vertex of the graph $G$, see Figure \ref{fig:P2GivesDeformationRetract}.
In particular, the dimer model $G$ 
admits the following embedding into $\T^2$. For each cell $P$ with non-vanishing index
we add a vertex $v(P)$. The vertex is white if $\iota(P) = 1$ or $-2$, 
and it is black if $\iota(P) = -1$ or $2$. 
For each cell $P$ with $|\iota(P)| = 2$ we add
embed the edges of $G$ containing $v(P)$ as straight line segments, see the rightmost picture in
Figure \ref{fig:P2GivesDeformationRetract}.
The remaining edges are added as described in Remark \ref{rem:embedding}.
It follows that $G$ is a strong deformation retract of $\D$.
\end{proof}

\begin{proof}[Proof\/ of\/ Theorem \ref{thm:ShellVolP2}]
To show the \emph{if}-direction, note that by Proposition \ref{pro:Consistent}
the dimer model obtained from $\H$ is consistent, and by Theorem \ref{thm:P2GivesDeformationRetract}
the dimer model is a deformation retract of $\D$. It follows that $\D$ has $2\area(\N)$-many connected
components of its complement, which is equivalent to that $\H$ has $2\area(\N)$-many cells of index zero.

To prove the \emph{only if}-direction we will rely heavily on the notation and results of \cite{FJ14}.
Let $\V$ denote the set of vertices of the hyperplane arrangement $\H$.
For $k\in \ZZ$, let $\V_k\subset \V$ denote the set of vertices of $\H$ such that
each $v \in \V_k$ bounds two cells of index $k$. Let $v\in \V_k$, and define the
\emph{oriented angle at $v$}, denoted $\theta_o(v)$, to be equal to the interior angle 
at $v$ of the polygons adjacent to $v$ with indices $k\pm 1$.
Similarly, we define the \emph{non-oriented angle at $v$}, denoted $\theta_n(v)$,
to be the interior angle at $v$ of the polygons adjacent to $v$ with index $k$.
The names stems from the fact that the boundaries of the polygons with indices $k\pm 1$
are locally oriented at $v$, while the boundaries of the polygons with index $k$ are not,
see Figure \ref{fig:indexmap}. Note that in \cite{FJ14} these angles were called \emph{inner}
respectively \emph{outer} angles; we have here chosen a name more distinct from the
terms interior and exterior angle.

Let $\Theta_o(k) = \sum_{v\in \V_k} \theta_o(v)$ and $\Theta_n(k) = \sum_{v\in \V_k} \theta_n(v)$.
It was shown in \cite[Lemma 3.2]{FJ14} that
\begin{equation}
\label{eqn:FJ14a}
2\sum_{k\in \ZZ} \Theta_o(k) =2\sum_{v\in \V} \theta_o(v) = 4\pi \area(\N).
\end{equation}
Moreover, as the sums of the exterior angles at the vertices of a cell of $\H$ of index zero is equal to
$2\pi$, we have that $\H$ has $2\area(\N)$-many cells of index zero if and only if
\begin{equation}
\label{eqn:FJ14b}
\Theta_n(-1) +  2\Theta_o(0) + \Theta_n(1)  = 4\pi \area(\N).
\end{equation}
In \cite{FJ14}, it was shown that the right hand side of \eqref{eqn:FJ14b} bounds the left hand side of
\eqref{eqn:FJ14b} for all dual hyperplane arrangements $\H$, implying that the number of cells of index zero is
at most $2\area(\N)$. By assumption, we have that \eqref{eqn:FJ14b}
holds with equality. From \eqref{eqn:FJ14a} and \eqref{eqn:FJ14b} we deduce that
\[
2\sum_{k\neq 0} \Theta_o(k) = \Theta_n(-1)  + \Theta_n(1).
\]
Thus, it suffices to show that
\begin{equation}
\label{eqn:FJ14c}
2\sum_{k > 0} \Theta_o(k) \geq \Theta_n(1)
\quad \text{and} \quad 
2\sum_{k < 0} \Theta_o(k) \geq \Theta_n(-1),
\end{equation}
with equality in both cases if and only if $|\iota(P)| \leq 2$ and each polygon $P$ with $|\iota(P)| = 2$
is a triangle. We will show the first inequality involving positive indices; the second is shown similarly.

Let us construct a number of cycles which partition on the set $\V_1$.
To begin, choose an arbitrary point $v \in \V_1$.
Since the (unique) cell $P$ of index two adjacent to $v$ is locally oriented at $v$, see Figure~\ref{fig:indexmap},
there is a unique way to depart $v$ along the boundary of $P$ in accordance with the orientations of $\H$.
Continue along the same line in $\H$ until we arrive at a second vertex $\tilde v \in \V_1$; then repeat
(c.f.\ \cite[Figure 6]{FJ14}). Notice that we locally, in the universal cover $\RR^2$, took a turn to the right at $\tilde v$.
Sinve $\V_1$ is finite, we will eventually arrive at a vertex which was already visited,
and this vertex must be $v$ since there is a unique path along which we can arrive each vertex.
Denote the obtained cycle by $C$.
If not all vertices in $\V_1$ was visited, then we choose a new starting point among the vertices not contained in 
the cycle $C$ and construct a second cycle, etc.

For an oriented, closed, piecewise linear cycle $C \subset \RR^2$
we will define the following sums. Let $R_o(C)$ respectively $L_o(C)$
denote the sum of all oriented angles at points where $C$ turn to the right respectively to
the left, and let $S_o(C)$ be the sum of all oriented angles at points where $C$ self-intersect.
If $C \subset \H$, then we define $I_o(C)$ to be the sum of all oriented angles 
at points in $\V$ where $C$ is smooth.
We define $R_n(C), L_n(C), S_n(C)$, and $I_n(C)$ similarly.
Finally, let $r(C)$ denote the number of right turns the
the cycle $C$ makes, so that $r(C) = |\V_1\cap C|$ if $C$ is constructed as above.

Since $\H$ is assumed to be generic, we have that if $v \in \V$ is such that one cycle $C$ is smooth at $v$, 
then there is exactly one other cycle $\tilde C$ passing through $v$.
Hence, to prove \eqref{eqn:FJ14c} (including the claim following the equation)
it suffices to show for each cycle $C$ constructed above that
\begin{equation}
\label{eqn:FJ14d}
2 R_o(C)  +2S_o(C)+ I_o(C) \geq R_n(C),
\end{equation}
with equality only if $C$ is the boundary of a triangle and $I_o(C) = 0$.

Since we are computing sums of angles, we can lift the cycle $C$ to one preimage in the universal cover
$\RR^2$ of $\T^2$. By abuse of notation we denote the preimage also by $C$.
Since $C$ turns only to the right, we have that
\begin{equation}
\label{eqn:FJ14e}
R_n(C)= 2 \pi d
\end{equation}
where $d$ is the \emph{turning number} of $C$. We have thus reduced to proving the
inequality \eqref{eqn:FJ14d}, with the right hand side replaced in accordance with \eqref{eqn:FJ14e},
ad that equality holds only of $C$ the boundary of a triangle and $I_o(C)=0$.

The cycle $C$ subdivides $\RR^2$ into a finite number of regions. Let us define an index map
on this subdivision, which only take the cycle $C$ into account. That is, for a region $P \in \pi_0(\RR^2 \setminus C)$
we define $\hat \iota(P)$ to be the class of $C$ in $H_1(\RR^2\setminus p, \ZZ) \simeq \ZZ$, 
where $p\in P$ is arbitrary. Notice that, if $C$ fulfills that $I_o(C) = 0$ then each $P\in \pi_0(\RR^2 \setminus C)$ is a lift of an element
of $\pi_0(\T^2\setminus \H)$ and, with slight abuse of notation, $\iota(P) = \hat \iota(P)+1$.
We consider the \emph{interior of\/ $C$} to be the union of all polygons $P \in \pi_0( \RR^2\setminus C)$
such that $\hat \iota(P) \geq 1$. The interior of $C$ need not be simply connected.
Assume that the maximum of $\hat \iota$ over $\pi_0( \RR^2\setminus C)$ is $m$. 
It suffices to show that $m=1$, that $I_o(C) = 0$ and that $C$ is a triangle.

For $j = 1, \dots, m$, let $C_j$ denote the oriented boundary of the closure of the union of all $P \in \pi_0( \RR^2\setminus C)$ with $\hat \iota(P) \geq j$. It follows that $C_1, \dots, C_m$ is a subdivision
of the cycle $C$, so that $d = d_1 + \dots + d_m$. Further more, the cycles $C_j$ intersect only at the vertices of $\H$. 
Actually, the cycle $C_j$ turns to the left at
a point $v\in \V$, if and only if $C_{j+1}$ turns to the right at $v$.
We deduce that
\begin{equation}
\label{eqn:FJ14f}
2R_o(C) +2S_o(C) +I_o(C) = \sum_{j=1}^m 2 R_o(C_j) + I_o(C_j) - 2 L_o(C_j).
\end{equation}
It was shown in \cite[Lemma 4.4]{FJ14} that, for each $j$,
\begin{equation}
\label{eqn:FJ14g}
2 R_o(C_j) + I_o(C_j) - 2 L_o(C_j) \geq 2\pi d_j.
\end{equation}
From \eqref{eqn:FJ14e}, \eqref{eqn:FJ14f}, and \eqref{eqn:FJ14g} we conclude that the inequality 
\eqref{eqn:FJ14d} holds; it remains to show that equality holds in \eqref{eqn:FJ14g} for each $j= 1, \dots, m$
only if $m=1$, and $C$ is a triangle with $I_o(C)=0$.

Consider the cycle $C_m$. Then, $C_m$ can only turn to the right, i.e. $L_o(C_m) = 0$.
Thus, $d_m$ is the number of connected components of $C_m$. Each such component
is a convex polygon, and if one such polygon has $t$ sides where $t \geq 3$
then that component contributes with $(t-2)\pi$ towards the sum $R_o(C_m)$. 
Thus, to have equality in \eqref{eqn:FJ14g} it must be that each polygon is a triangle
and, in addition, it must hold that $I_o(C_m) = 0$.

Assume now that $m \geq 2$. We will consider the cycle $D = C_{m-1} + C_m$,
which can be viewed as a cycle only turning to the right.
To obtain our contradiction, it suffices to show that 
\begin{equation}
\label{eqn:FJ14h}
2R_o(D)  + I_o(D) > 2\pi (d_{m-1} + d_m).
\end{equation}
Here, $d_{m-1} + d_m$ is the turning number of the cycle $D$. While $D$ is not necessarily
connected, it suffices for us to consider a connected component of $D$ which contain
at least one connected component of $C_m$ in its interior. We assume that this is the case;
for simplicity we will not alter the notation.
Let $t_j$ denote the number of connected components of the interior of $C_j$,
and let $g$ be the sum of the genera of the connected components of the interior of $C_{m-1}$.
We have that
\[
d_{m-1} + d_m  = t_m + t_{m-1} - g.
\]

\begin{lemma}
\label{lem:TechTheorem1}
Let $D = C_{m-1}+ C_m \subset \RR^2$ be a piecewise linear closed cycle turning only to the right
such that the following holds. Firstly, $C_m$ is a finite union of $t_m\geq 1$ triangles. Secondly, $C_{m-1}$ turns to the left at a point $v$ if and only if\/ $C_{m}$ turns to the right at $v$. 
Thirdly, each connected component of\/ $C_{m-1}$ contains at least one component of\/
$C_m$ in its interior. Then, $2 R_o(D) \geq \pi r(D)$.
\end{lemma}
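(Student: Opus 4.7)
The plan is to translate the angular inequality into a combinatorial right-turn count via Gauss--Bonnet, and then to establish that count using the three hypotheses in order. Since by hypothesis $D$ is a closed piecewise linear curve turning only to the right, summing exterior angles gives
\[
\pi\,r(D) - R_o(D) \;=\; 2\pi\,d(D) \;=\; 2\pi\,(d_{m-1}+d_m),
\]
so the desired statement $2R_o(D) \geq \pi r(D)$ is equivalent to the purely combinatorial bound $r(D) \geq 4(d_{m-1}+d_m)$. The second hypothesis pairs each of the $3t_m$ right-turn corners of $C_m$ (three per triangle) with a coincident left-turn corner of $C_{m-1}$, and when forming the sum $D = C_{m-1}+C_m$ as a right-only-turning cycle each such pair is re-routed into a smooth self-crossing rather than a pair of corners. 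Hence $r(D)$ equals the right-turn count $r(C_{m-1})$ of $C_{m-1}$ alone, and using $d_m = t_m$ it remains to show
\[
r(C_{m-1}) \;\geq\; 4(d_{m-1}+t_m).
\]

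For a first bound I would apply Gauss--Bonnet directly to $C_{m-1}$, which has both right and left turns. Its $3t_m$ left-turn corners sit at the triangle vertices, and at each such vertex the oriented angle equals the interior angle of the adjacent triangle; summing these over the three vertices of each triangle gives $L_o(C_{m-1}) = \pi\,t_m$. The turning-number equation then solves to
\[
R_o(C_{m-1}) \;=\; \pi\,r(C_{m-1}) \;-\; 2\pi\,(t_m+d_{m-1}),
\]
and non-negativity $R_o(C_{m-1})\geq 0$ immediately yields $r(C_{m-1})\geq 2(d_{m-1}+t_m)$, which is only half of the required bound.

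The main obstacle, and where the third (containment) hypothesis must enter in an essential way, is closing this remaining factor-of-two gap. I would proceed component by component: for a single connected component of $C_{m-1}$ of turning number $\tilde d$ enclosing $\tilde t \geq 1$ triangles of $C_m$ in its interior, one must establish $\tilde r \geq 4(\tilde d + \tilde t)$. A natural approach is induction on $\tilde t$ with base case $\tilde t = 1$ verified geometrically, by peeling off one innermost enclosed triangle together with its three associated left-turn corners of $C_{m-1}$ and checking that the resulting shrunken configuration still satisfies all three hypotheses. The delicate step, which I expect to be the technical heart of the proof, is ensuring that this peeling procedure never double-counts a right-turn corner between distinct enclosed triangles: each enclosed triangle must contribute its own independent ``wrapping quota'' of right turns, and this independence is precisely what the genericity (simplicity) of the hyperplane arrangement $\H$ guarantees.
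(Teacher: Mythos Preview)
Your Gauss--Bonnet translation is correct: since $D$ turns only to the right, one has $\pi\,r(D)-R_o(D)=2\pi\,d(D)$, and the smooth re-routing at the shared vertices preserves the rotation number, so $d(D)=d_{m-1}+d_m$ and the lemma becomes $r(D)\geq 4(d_{m-1}+d_m)$. The identification $r(D)=r(C_{m-1})$ and the formula $R_o(C_{m-1})=\pi\,r(C_{m-1})-2\pi(d_{m-1}+t_m)$ are likewise correct, and you rightly isolate the whole difficulty in the factor-of-two gap between the trivial bound $r(C_{m-1})\geq 2(d_{m-1}+t_m)$ and the target.

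What follows, however, does not close that gap. The base case $\tilde t=1$ is declared ``verified geometrically'' but no argument is supplied; this case already needs a genuine observation (each of the three arcs of $C_{m-1}$ between consecutive reflex corners starts and ends along the \emph{same} extended side of the enclosed triangle, hence with zero net tangent rotation, forcing at least three convex corners per arc). The inductive ``peeling'' is not defined: deleting a triangle and its three reflex corners does not leave a curve, and you give no reconnection rule that preserves the hypotheses or controls $r$ and $d$. Finally, the appeal to simplicity of $\H$ is out of place: the lemma is stated for abstract piecewise linear cycles in $\RR^2$ and makes no reference to any arrangement, so nothing about $\H$ can be invoked here.

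The paper runs a different induction. Rather than removing a triangle, it \emph{adds} a straight segment $\ell$, traversed once in each direction, across a cell of the interior of $C_{m-1}$ so as to separate two enclosed triangles (or cut a handle). This raises $r(D)$ by $4$ and $R_o(D)$ by $2\pi$, so the inequality $2R_o(D)\geq\pi\,r(D)$ is preserved, while the quantity $t_m-t_{m-1}+g$ strictly decreases. The base of that induction is exactly your $\tilde t=1$ situation (one triangle per component of the interior, genus zero), which is then handled directly. Your reformulation is a sound starting point, but the substance of the lemma lies in an inductive mechanism you have not supplied.
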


\begin{proof} 
Assume that the interior of $D$ has $t_{m-1}$ connected components, and that the 
sum of the genera of the connected components of the interior of $C_{m-1}$ is $g$.
Then, as above, the turning number $d$ of $D$ is equal to $t_m + t_{m-1} - g$.
We give the proof by an induction over $t_{m}-t_{m-1}+g$. 

By assumption, $t_m \geq t_{m-1}$, thus $t_{m}-t_{m-1}+g = 0$ if and only 
if $g = 0$ and each connected component
of the interior of $C_{m-1}$ contains exactly one connected component of $C_m$.
This case is the basis of the induction.
Indeed, if $t_m= t_{m-1}=1$, then it is straightforward to verify that
$r(D)\geq 6$ and if $r(D) = 6 + j$ then
\[
2R_o(D)\geq \pi \, \left(6 + 2 j\right) \geq \pi \, r(D).
 \]
 Since both $R_o$ and $r$ are linear functions of the cycles, the induction basis follows.
 
For the induction step, assume there is a connected component
of the interior of $C_{m-1}$ which contains at least two connected components of $C_m$.
Let $P \in \pi_0(\RR^2\setminus D)$
be a polygon in the interior of $C_{m-1}$ which shares boundary with 
at least two connected components of $C_m$.
Draw a straight line segment $\ell$ through $P$ not intersecting $C_{m}$ but separating two
connected components of $C_m$.
Let us consider a new cycle $\tilde D = D + \ell - \ell$. We can view the cycle $\tilde D$ as
only turning to the right, in which case $r(\tilde D) = r(D) + 4$ and $R_o(\tilde D) = R_o(D) + 2 \pi$.
There are two cases:

\underline{Case 1:} The interior of $\tilde D$ has the same number of connected components as the interior of $D$. In this case we have the $\tilde g = g - 1$, while $t_m$ and $t_{m-1}$ are unaltered.
In particular, we find that $\tilde t_{m-1} - \tilde t_m + \tilde g = t_{m-1} - t_m + g - 1$.
Thus, the statement follows by induction in this case.

\underline{Case 2:} One connected component of the interior of $D$ was separated by $\ell$.
In this case $\tilde t_m = t_m + 1$, while $g$ and $t_{m-1}$ are unaltered.
In particular, we find that $\tilde t_{m-1} - \tilde t_m + \tilde g = t_{m-1} - t_m + g - 1$.
Thus, the statement follows by induction also in this case.
\end{proof}

\begin{lemma}
\label{lem:TechTheorem2}
Let $D\subset \RR^2$ be a piecewise linear closed cycle with turning number $d$, where $d\geq 1$, then $r(D) \geq 2d+1$.
\end{lemma}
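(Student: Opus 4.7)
The plan is to express the turning number of a closed piecewise linear cycle as a sum of signed turning angles at its vertices, and then to use the strict bound $|\theta_i|<\pi$ on each individual turn in order to force a strict inequality between $\pi r(D)$ and $2\pi d$.

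More precisely, I would proceed as follows. Let $v_1,\dots,v_n$ be the vertices of $D$ in the cyclic order induced by its orientation, and let $\theta_i\in(-\pi,\pi)\setminus\{0\}$ denote the signed turning angle at $v_i$, using the convention that right turns contribute positively (so that the turning number of a triangle traversed clockwise is $+1$, matching the convention implicit in the lemma statement). Since $D$ is a closed piecewise linear cycle, the total signed turning equals $2\pi$ times the turning number, giving
\[
\sum_{i=1}^n \theta_i \;=\; 2\pi d.
\]
Let $R\subset\{1,\dots,n\}$ index the right turns and $L$ index the left turns, so that $\theta_i>0$ for $i\in R$ and $\theta_i<0$ for $i\in L$, and $|R|=r(D)$. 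Rewriting the identity as
\[
\sum_{i\in R}\theta_i \;=\; 2\pi d \,+\, \sum_{i\in L}(-\theta_i),
\]
both sides of this equation are nonnegative (using $d\geq 1$).

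The key step is then the strict bound $\theta_i<\pi$ for every $i\in R$, which holds because a turning angle of exactly $\pi$ would correspond to a degenerate backtrack. This yields
\[
\pi\,r(D) \;>\; \sum_{i\in R}\theta_i \;\geq\; 2\pi d,
\]
so $r(D)>2d$. Since $r(D)$ and $2d$ are both integers, we conclude $r(D)\geq 2d+1$, as required. The proof contains no real obstacle; the only subtlety is checking that the sign convention for $d$ is consistent with the one used earlier in the paper (where cycles turning only to the right have positive turning number equal to the number of connected components of their interior), which indeed it is.
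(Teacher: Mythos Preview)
Your proof is correct and is genuinely different from the paper's. The paper argues by induction on $d$: for the base case $d=1$ it observes that fewer than three right turns cannot produce turning number $+1$, and for $d\geq 2$ it uses that a cycle with turning number at least two must self-intersect, splits $D$ at a self-intersection into cycles $D_1$ and $D_2$ with $d=d_1+d_2$ and $r(D)=r(D_1)+r(D_2)-1$, and applies the inductive hypothesis.

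Your argument is more direct and more elementary: it avoids both the topological input (existence of a self-intersection when $d\geq 2$) and the induction, replacing them with the single identity $\sum_i\theta_i=2\pi d$ together with the strict bound $\theta_i<\pi$ at each right turn. This yields $r(D)>2d$ in one line. The paper's decomposition approach, on the other hand, is closer in spirit to the surrounding argument (which repeatedly breaks cycles into pieces $C_1,\dots,C_m$), so it fits the narrative, but your angle-counting is the cleaner proof of this isolated lemma.
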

 
\begin{proof}
We give the proof by induction. If $d=1$ then this is obvious; a closed piecewise linear cycle 
which only turns to the right twice must have negative turning number.
If $d \geq 2$, then $D$ has a self-intersection point. 
The statement now follows from that we can subdivide $D$ into two cycles $D_1$ and 
$D_2$ with turning numbers $d_1$ and $d_2 $ such that $d  = d_1 + d_2 $
and $r(D) = r(D_1) + r(D_2)-1$.
\end{proof}

We now complete the Proof\/ of\/ Theorem \ref{thm:ShellVolP2} using
Lemmas \ref{lem:TechTheorem1} and \ref{lem:TechTheorem2}.
We have that
\[
2R_o(D) \geq  \pi  r(D) \geq 2\pi (d_{m-1} + d_m) + 2\pi > 2 \pi (d_{m-1} + d_m).
\]
Thus, the strict inequality \eqref{eqn:FJ14h} holds, and we obtain our contradiction.
It follows that $m = 1$. Hence, $|\iota(P)| \leq 2$ and, as we saw earlier,
each $P$ with $|\iota(P)| = 2$ is a triangle.
\end{proof}

\begin{proof}[Proof\/ of\/ Theorem \ref{thm:DeformationRetract}]
Assume that $G$ is a deformation retract of $\overline{\C}$. 
Then, the number of faces of $G$, which is at least equal to $2 \area(\N)$,
is equal to the cardinality of $\pi_0\big(\T^2\setminus \overline{\C}\big)$.
But the cardinality of $\pi_0\big(\T^2\setminus \overline{\C}\big)$ 
is at most $2\area (\N)$ by \cite{FJ14}. It follows that  $\pi_0\big(\T^2\setminus \overline{\C}\big)$ 
has maximal cardinality.

Conversely, assume that $\pi_0\big(\T^2\setminus \overline{\C}\big)$ has cardinality
$2 \area(\N)$. Then $\pi_0\big(\T^2\setminus \D\big)$ has cardinality
$2 \area(\N)$ as well, implying that $\D$ is a strong deformation contract of $\overline{\C}$.
Also, we have that  $\H$ has exactly $2 \area(\N)$-many cells of index zero.
By Theorem \ref{thm:ShellVolP2} we find that $|\iota(P)| \leq 2$ and if $|\iota(P)| = 2$
then $P$ is a triangle. Hence, it follows from Proposition \ref{pro:Consistent}
that the dimer model $G$ obtained from $\H$ using the odd index graph and
Yang--Baxter modifications is consistent, 
and it follows from Theorem \ref{thm:P2GivesDeformationRetract}
then $G$ is a strong deformation retract of $\overline{\C}$.
\end{proof}

\begin{remark}
\label{rem:HarnackCurves}
In the examples of \cite{FU10, UY11, UY13} the dimer model $G^-$ was obtained from 
the coamoeba of the characteristic polynomial.
In general, the characteristic polynomial is not sufficiently generic.
It is known that the characteristic polynomial defines a Harnack curve \cite{KO06}.
Typically, coamoebas of Harnack curves have non-simple shells, see, e.g., \cite{Lan15}.
For an explicit example of a polygon $\N$ for which the dimer model is not a deformation retract
of the coamoeba $\overline{\C}$ of the characteristic polynomial, it suffices to consider the polygon
$\N$ with vertices $(0,0), (2,0), (0,1),$ and $(1,1)$. Then, $2 \area(\N) = 3$. 
The characteristic polynomial is
\[
f(z_1,z_2) = 1 + 2 z_1 + z_1^2 - z_2 + z_1 z_2.
\]
The shell $\H$, which is degenerate, has two cells of index zero. 
As a consequence $\RR^2\setminus \overline\C$ has at most
(and  $\RR^2\setminus \D$ has exactly) two connected components.
\end{remark}

\section{Circuits}

Theorem \ref{thm:DeformationRetract} raises the following problem: For which polygons $\N$
can we find a polynomial $f$ with Newton polygon $\N$ such that $\RR^2\setminus\overline{\C}$
has $2\area(\N)$-many connected components? This problem is open as of this writing.
It is, however, solved in the case when $f$ is supported on a (possibly degenerate) circuit.
That is, in the case when $A$ consist of four points. (The circuit is said to be degenerate if
three of the points are contained in one line.) Coamoebas of polynomials supported on circuits
was considered in the last section of \cite{FJ15}, and studied in detail in \cite{For16}.
We include here the most important implications in relation to dimer models.

\begin{proof}[Proof\/ of\/ {Corollary~\ref{cor:HVforCircuits}}]
As $f$ is assumed to be supported on a circuit
we are in the situation considered in \cite[\S 5.1]{FJ15},
where is was shown that $\overline{\C}$ has the maximal number of components
of its complement for generic coefficients of the polynomial $f$.
Hence, the result follows from Theorem \ref{thm:DeformationRetract}.
\end{proof}

\begin{remark}
\label{rem:SingularPoints}
In \cite{FHKV08} the set of critical points of the polynomial $f\in (\CC^\times)^A$ was considered.
The number of critical points in $(\CC^\times)^2$ is, by the Bernstein--Kushnirenko theorem,
equal to $2\area(\N)$. This is equal to the number of gauge groups in the
quiver theory (see \cite[\S 2.1.1]{FHKV08}). In the case when $A$ is a circuit, it was shown in \cite{For16} that (after possibly translating $\N$) the argument map restricts to a bijection between the set of critical points of $f$ and the connected components of $\T^2 \setminus \overline{\C}$. That is, these critical points are in a bijective relation with the faces of the dimer
model $G^-$, i.e., with the gauge groups in the quiver theory, see \cite{FHKV08, HV07}.
\end{remark}


\bibliographystyle{amsplain}

%

\end{document}